\newtheorem*{rep@theorem}{\rep@title}
\newcommand{\newreptheorem}[2]{%
	\newenvironment{rep#1}[1]{%
		\def\rep@title{#2 \ref{##1}}%
		\begin{rep@theorem}}%
		{\end{rep@theorem}}}
\newtheorem{theorem}{Theorem}[section]
\newtheorem{lemma}[theorem]{Lemma}
\newtheorem{proposition}[theorem]{Proposition}
\newtheorem{corollary}[theorem]{Corollary}
\newtheorem{conjecture}[theorem]{Conjecture}
\theoremstyle{definition}
\newtheorem{remark}[theorem]{Remark}
\DeclareMathOperator{\C}{\mathcal{C}}
\DeclareMathOperator{\GC}{\mathcal{G}(\mathcal{C})}
\DeclareMathOperator{\Cpt}{\mathcal{C}_\mathrm{pt}}
\DeclareMathOperator{\Cad}{\mathcal{C}_\mathrm{ad}}
\DeclareMathOperator{\FPdim}{FPdim}
\DeclareMathOperator{\Rep}{Rep}
\DeclareMathOperator{\rank}{rank}
\newcommand\VecGw{\operatorname{\textbf{Vec}_G^{\omega}}}
\DeclareMathOperator{\fpdim}{FPdim}
\DeclareMathOperator{\svecg}{sVec}
\DeclareMathOperator{\ad}{ad}
\DeclareMathOperator{\pt}{pt}
\DeclareMathOperator{\vecg}{Vec}
\DeclareMathOperator{\sem}{semion}
\title{On modular categories with Frobenius-Perron dimension congruent to 2 modulo 4}
\begin{document}

\author[A. Chakravarthy]{Akshaya Chakravarthy}
\address{Thomas Jefferson High School for Science and Technology, USA}
\email{2024achakrav@tjhsst.edu}

\author[A. Czenky]{Agustina Czenky}
\address{Department of Mathematics, University of Oregon, Eugene, OR 97403, USA}
\email{aczenky@uoregon.edu}

\author[J. Plavnik]{Julia Plavnik}
\address{\parbox{\linewidth}{Department of Mathematics, Indiana University}}
\email{jplavnik@iu.edu}

\begin{abstract}
We contribute to the classification of modular categories $\C$ with $\FPdim(\C)\equiv 2 \pmod 4$. We prove that such categories have group of invertibles of even order, and that they factorize as $\mathcal C\cong \widetilde{\mathcal C} \boxtimes \sem$, where $\widetilde{\mathcal C}$ is an odd-dimensional modular category and $\sem$ is the rank 2 pointed modular category. This reduces the classification of these categories to the classification of odd-dimensional modular categories. It follows that  modular categories $\C$ with  $\FPdim(\C)\equiv 2 \pmod 4$ of rank up to 46 are pointed. More generally, we prove that if $\C$ is a weakly integral MTC and $p$ is an odd prime dividing the order of the group of invertibles that has multiplicity one in $\fpdim(\C)$, then we have a factorization $\C \cong \tilde \C \boxtimes \vecg_{\mathbb Z_p}^{\chi},$  for $\tilde \C$ an MTC with dimension not divisible by $p$ and $\chi$ a non-degenerate quadratic form on $\mathbb Z_p$.

\end{abstract}

\maketitle

\tableofcontents

\section{Introduction}

Modular tensor categories (MTCs) have several applications in both mathematics and physics. For example, they are related to topological quantum field theory~\cite{T, MS}, topological quantum computation~\cite{R}, and  quantum groups~\cite{BK}. Besides, modular categories are of independent mathematical interest, and their classification and properties are currently actively studied.  

An MTC is a fusion category with braiding and ribbon structures, which satisfy a non-degeneracy condition. 
 One way to approach their classification is by rank, i.e., by the number of (isomorphism classes of) simple objects. It was proven in~\cite{BNRW1} that there are finitely many (up to equivalence) MTCs of a fixed rank, which makes this a more reasonable problem. It is currently being studied intensively, see for example  \cite{BNRW2, ABPP, BR, CGP, CP, HR, NRW, RSW}. MTCs in which all simple objects have integral Frobenius-Perron dimension are called \emph{integral}, and they are of particular interest since they correspond to categories of representations of ribbon factorizable finite-dimensional semisimple quasi-Hopf algebras. Recently,  a classification of integral MTCs up to rank 12 was reported in \cite{ABPP} and \cite{NRW}. Furthermore, MTCs of odd dimension (which are always integral) have been classified up to rank 23, see \cite{BR, CP, CGP}.
	  
	  \medbreak 
	  This paper is dedicated to the study of MTCs with Frobenius-Perron dimension congruent to 2 modulo 4, which are known to be integral by \cite[Lemma 4.4]{BPR} and \cite[Corollary 3.2]{DN}.  They  have a special advantage regarding classification:
	  it has been proven that they cannot be perfect \cite[Corollary 10.11]{BP}, that is, they have at least one non-trivial invertible object. Moreover, the square of the Frobenius-Perron dimension of any simple object divides the Frobenius-Perron dimension of the category \cite[Lemma 1.2]{EG}, and thus the Frobenius-Perron dimensions of all simple objects must be odd. Many of these properties resemble the ones that odd-dimensional modular categories enjoy but they have a major difference as well: they have nontrivial self-dual objects. Our results show that the similarities (and discrepancies) between these two classes of modular categories are not just a coincidence since they are closely connected.

   Our first main result shows that an MTC $\C$ with $\FPdim(\C)\equiv 2 \pmod 4$ and group of invertibles of even order factorizes in terms of an odd-dimensional MTC of lower rank. See the precise statement below. 

   \begin{reptheorem}{evenfac}
   Let $\mathcal C$ be an MTC with $\FPdim(\C)\equiv 2 \pmod 4$ and $|\GC|$ even. Then, the pointed subcategory of rank 2 is a modular subcategory of $\mathcal C$. Moreover, we have that $\mathcal C\cong \widetilde{\mathcal C} \boxtimes \sem$, where $\widetilde{\mathcal C}$ is an odd-dimensional modular category and $\sem$ is the rank 2 pointed modular category.
\end{reptheorem}

Using this result and equivariantization techniques, we can prove the following.
\begin{reptheorem}{gceven}
        Let $\mathcal C$ be an MTC with $\FPdim(\C)\equiv 2 \pmod 4$. Then, $|\mathcal G(\mathcal C)|$ is even.
    \end{reptheorem}

As a consequence of the two previous theorems we have the following.

   \begin{repcorollary}{class}
        Let $\mathcal C$ be an MTC with $\FPdim(\C)\equiv 2 \pmod 4$. Then, $\mathcal C\cong \widetilde{\mathcal C} \boxtimes \sem$, where $\widetilde{\mathcal C}$ is an odd-dimensional modular category and $\sem$ is the rank 2 pointed modular category.
    \end{repcorollary}

The upshot is that we can reduce the problem of classifying MTCs with Frobenius-Perron dimension congruent to 2 modulo 4 to classifying those of odd Frobenius-Perron dimension.
   
It has been proven that MTCs with dimension congruent to 2 modulo 4 up to rank 10 are pointed \cite[Theorem 1.2]{ABPP}. As a consequence of our results and the classification of low-rank odd-dimensional MTCs \cite{BR, CP, CGP}, we extend this to rank 46. That is, MTCs with Frobenius-Perron dimension congruent to 2 modulo 4 up to rank 46 are pointed and thus classified by group data.

\medbreak
We generalize the factorization result above to the case of $p$ an odd prime of multiplicity one in $\FPdim(\C),$ see the precise statement below. 

\begin{reptheorem}{factorization for p odd}
    Let $\mathcal C$ be a weakly integral MTC, and let $p$ be an odd prime dividing $|\GC|.$ Suppose that $p$ has multiplicity 1 in $\fpdim(\C).$ Then, $\C$ has a pointed modular subcategory of dimension $p$. In particular, $\C\cong \tilde \C \boxtimes \vecg_{\mathbb Z_p}^\chi$ for $\tilde \C$ an MTC of dimension not divisible by $p$ and $\chi$ a non-degenerate quadratic form on $\mathbb Z_p$.
\end{reptheorem}

More generally, we prove the following factorization result. 

\begin{repcorollary}{GC and GCad same prime factors}
    Let $\C$ be an MTC. Then 
    \begin{align*}
        \mathcal C \cong \mathcal P \boxtimes \tilde{\C},
    \end{align*}
    where $\mathcal P$ is a pointed modular category  and $\tilde{\C}$ is a modular category such that $\mathcal G(\tilde{\C})$ and $\mathcal G(\tilde{\C}_{\mathrm{ad}})$ have the same prime factors.  
\end{repcorollary}

	 This paper is organized as follows.
 A brief introduction to fusion and modular categories is given in Section~\ref{prelim}.  In Section \ref{factorization} we state and prove our main results regarding the factorization of MTCs. We also show the existence of pointed modular subcategories under certain conditions on the order of the group of invertibles. Lastly, in Section \ref{solvability}, we discuss the solvability of MTCs with dimension congruent to 2 modulo 4, and we prove a Feit-Thompson type result for the weakly group-theoretical case.

\settocdepth{part}
\section*{Acknowledgements}
	 We thank the MIT PRIMES-USA program and its coordinators Prof.~Pavel Etingof, Dr.~Slava Gerovitch, and Dr.~Tanya Khovanova for giving us this opportunity to do research. We also appreciate NSF grant DMS-2146392 and the Simons Foundation Award 889000 as part of the Simons Collaboration on Global Categorical Symmetries for partially supporting the research of J.P. The hospitality and excellent working conditions at the Department of Mathematics at the University of Hamburg are sincerely thanked by J.P., where she carried out part of this research as an Experienced Fellow of the Alexander von Humboldt Foundation. Lastly, we appreciate C. Galindo and C. Jones for helpful conversations.
\settocdepth{section}

\section{Preliminaries}\label{prelim}

We work over an algebraically closed field  $\mathbf k$ of characteristic zero. We let $\operatorname{Vec}$ be the category of finite dimensional vector spaces over $\mathbf k$ and $\Rep(G)$ be the category of finite dimensional representations of a finite group $G$.  We refer the reader to~\cite{ENO1, ENO2, EGNO} for the notions of tensor and fusion categories used throughout. 

\subsection{Fusion categories}
A \emph{fusion category} $\C$  is a rigid tensor category that is semisimple and has finitely many isomorphism classes of simple objects. 
 We denote by \textbf 1 its identity object, and by  $\mathcal{O(C)}$ the set of isomorphism classes of simple objects in $\mathcal C,$ so that $\rank(\C)=|\mathcal{O(C)}|$.

For an object $X$ in a fusion category $\mathcal C$, we denote by $\FPdim(X)$ its Frobenius-Perron dimension, see~\cite[Proposition 3.3.4]{EGNO}. The \emph{Frobenius-Perron dimension} $\FPdim(\mathcal C)$ of $\mathcal C$ is defined by
\begin{align*}
	\FPdim(\mathcal C): = \sum\limits_{X\in \mathcal{O(C)} } \FPdim(X)^2.
\end{align*}
The category $\C$ is said to be \emph{weakly integral} if $\FPdim(\mathcal C)$ is an integer, and \emph{integral} if $\FPdim(X)$ is an integer for all $X\in \C$.

Let $\mathcal C$ be a fusion category and $X$ an object in $\mathcal C$. If its evaluation $X^* \otimes X \to \mathbf 1$ and coevaluation $\mathbf 1 \to X \otimes X^*$ maps are isomorphisms, $X$ is said to be \emph{invertible}, see~\cite[Definition 2.11.1]{EGNO}. Equivalently, an object $X$ is invertible if it has Frobenius-Perron dimension one. Isomorphism classes of invertible objects in $\mathcal C$ as a group will be denoted as $\mathcal{G(C)}$. The \emph{pointed subcategory} $\mathcal{C}_\mathrm{pt}$ of $\C$
is the largest pointed subcategory of $\mathcal C$. We say that $\mathcal C$ is \emph{pointed} if all its simple objects are invertible, i.e, if $\C=\Cpt$. It is well known that any pointed fusion category $\mathcal C$ is equivalent to the category of finite dimensional $G$-graded vector
spaces $\VecGw$, where $G$ is a finite group and $\omega$ is a 3-cocycle on $G$ with coefficients in $\mathbf{k}^{\times}$. Hence pointed categories are classified by group data. 

For an object $X$ in a fusion category $\mathcal C$, let $X^*\in \mathcal C$ denote its dual. If $X\cong X^*$, we say $X$ is $\emph{self-dual}$. Otherwise, when $X\not\cong X^*$, we say that $X$ is \emph{non-self-dual}. Then $\C$ is said to be maximally-non-self-dual (MNSD) if $X\not\cong X^*$ for all simple $X\not\cong \mathbf 1$ in $\C$.
 
For a fusion category $\C$ and a finite group $G$, a \emph{$G$-grading} on $\mathcal C$ is a decomposition  $\mathcal C = \displaystyle\bigoplus_{g\in G} \mathcal C_g,$  such that  $\mathcal C_g$ is an abelian subcategory for all $g\in G$, and the grading behaves well under tensor product and duals, see~\cite[Section 4.14]{EGNO} and ~\cite[Section 2.1]{GN}. When $\mathcal C_g\ne 0$ for all $g \in G$, the grading is said to be \emph{faithful}. In such case, by~\cite[Proposition 8.20]{ENO1}, 
 all the components $\mathcal C_g$ have the same Frobenius-Perron dimension, and so $$\FPdim(\mathcal C)=|G|\FPdim(\mathcal C_e).$$
By~\cite{GN}, any fusion category $\mathcal{C}$ has a canonical faithful grading called the \emph{universal grading}. The trivial component in this grading coincides with the \emph{adjoint subcategory}  $\mathcal{C}_{\mathrm{ad}}$ of $\mathcal C$, which is the fusion subcategory generated by $X\otimes X^*$ for all $X \in \mathcal{O(C)}$.  Let $\mathcal{U(C)}$ be the group corresponding to the universal grading of $\C$. We then have that if $\mathcal{C}$ is equipped with a braiding, then $\mathcal{U(C)}$ is abelian. Also, if $\mathcal C$ is modular, then $\mathcal{U(C)}$  is isomorphic to $\mathcal{G(C)}$~\cite[Theorem 6.3]{GN}.

\subsubsection{Solvable fusion categories}

A fusion category $\mathcal{C}$ is \emph{solvable} if it is Morita equivalent to a cyclically nilpotent fusion category, see~\cite{ENO2}. The class of solvable fusion categories is closed under the Deligne tensor product, Drinfeld centers, fusion subcategories, and extensions and equivariantizations by solvable groups,~\cite[Propositions 4.1 and 4.5]{ENO2}.

\subsection{Modular tensor categories}

A \emph{braiding} on a fusion category  $\mathcal C$ is a natural isomorphism $$c_{X,Y}: X\otimes Y \xrightarrow{\cong} Y \otimes X,$$ for all $X,Y\in \C$, satisfying the hexagon axioms, see~\cite[Definition 8.1.1]{EGNO}. Then $\mathcal C$ is said to be \emph{braided} if it is equipped with a braiding. 
 Let $\mathcal{C}$ be a braided fusion category. A \emph{pivotal structure} on $\mathcal C$ is a natural isomorphism $\psi: \text{Id} \xrightarrow{\sim} (-)^{**}$,  see~\cite{BW, EGNO}.  Associated to a pivotal structure is the left and right trace of a morphism $X\to X$, see e.g.~\cite[Section 4.7]{EGNO}. The pivotal structure is called \emph{spherical} if for any such morphism both traces are equal.

A \emph{pre-modular} tensor category is a braided fusion category equipped with a spherical structure ~\cite[Definition 8.13.1]{EGNO}. 
 The \emph{S-matrix} $S$ of a pre-modular category $\mathcal C$ with braiding $c_{X,Y}: X\otimes Y \xrightarrow{\cong} Y \otimes X$ is defined by $S:= \left(s_{X,Y}\right)_{X,Y \in \mathcal{O(\mathcal{C})}}$, where $s_{X,Y}$ is the trace of $c_{Y,X}c_{X,Y}:X\otimes Y \to X\otimes Y$, see \cite[Section 8.13]{EGNO}. When the $S$-matrix is invertible, a pre-modular tensor category $\mathcal C$ is said to be \emph{modular}.

In this work, the terms ``premodular tensor category'' and ``modular tensor category'' imply semisimplicity, in order to be consistent with prior papers. Also, the term ``modular category'' is equivalent to a ``modular tensor category'' as defined in this paper.

Let $\mathcal{C}$ be a braided fusion category  with braiding $c_{X,Y}:X\otimes Y\xrightarrow{\cong}Y\otimes X$.  The \emph{M\"{u}ger centralizer} of a fusion subcategory $\mathcal{K}$ is the fusion subcategory $\mathcal K'$ of $\mathcal C$ given by the objects $Y$ in $\mathcal{C}$ satisfying
\begin{align}\label{centralizador}
	c_{Y,X} \circ c_{X,Y}=\operatorname{id}_{X\otimes Y}, \ \ \text{for all}\ X \in \mathcal{K},
\end{align} 
see~\cite{M}. We say  $\mathcal K$  is a \emph{modular subcategory}  if and only if $\mathcal K \cap \mathcal K' = \vecg$.
When  $\mathcal C'=\mathcal C$, we say that $\C$ is symmetric. A symmetric fusion category is called \emph{Tannakian} if there exists a finite group $G$ such that it is equivalent as a braided fusion category to $\Rep(G)$. In particular, symmetric fusion categories of odd Frobenius-Perron dimension are Tannakian~\cite[Corollary 2.50]{DGNO1}.

\begin{remark}\cite[Remark 2.2]{CP}
	If $\mathcal{C}$ is a braided fusion category, then $(\mathcal{C}_{\mathrm{ad}})_{\mathrm{pt}}$  is symmetric. 
\end{remark}

\subsubsection{Equivariantization and de-equivariantization}\label{section: equivariantization}
Let $\C$ be a braided fusion category with an action of a finite group $G$, see~\cite[Definition 4.15.1]{EGNO}. The \emph{equivariantization} of $\mathcal C$ by $G$ is the 
braided fusion category $\mathcal C^G$ of $G$-equivariant objects in $\mathcal C$, see \cite[Section 2.7]{EGNO}. 
Conversely, for $\mathcal C$ a braided fusion category with a Tannkian subcategory $\Rep(G)$, we can construct the \emph{de-equivariantization} $\mathcal C_G$ of $\mathcal C$ with
respect to $\Rep(G)$, which is also a braided fusion category, see~\cite[Theorem 8.23.3]{EGNO} for the construction. It turns out that there is an equivalence of braided fusion categories $\mathcal C\cong (\mathcal C_G)^G$.
By \cite[Proposition 4.26]{DGNO1}, these constructions satisfy $\FPdim(\mathcal C^G)=|G|\cdot \FPdim(\mathcal C)$  and  $\FPdim(\mathcal C_G)=\frac{1}{|G|}\cdot \FPdim(\mathcal C).$

If $\mathcal C$ is an MTC such that $(\mathcal{C}_{\mathrm{ad}})_{\mathrm{pt}}$ is odd-dimensional, then it follows from~\cite[Remark 2.2]{CP} that  $(\mathcal{C}_{\mathrm{ad}})_{\mathrm{pt}}$ is symmetric and thus Tannakian~\cite[Corollary 2.50]{DGNO1}, so it is equivalent to $\Rep(G)$ for some finite group $G$. Then by \cite[Remark 2.3]{ENO2}, $(\mathcal C_{\mathrm{ad}})_G$ is modular.

\section{Factorization} \label{factorization}
In this section, we  show the existence of modular subcategories for certain types of MTCs, and prove our factorization results.

\subsection{MTCs of dimension congruent to 2 modulo 4}
We prove here one of our main results, which states that if $\mathcal C$ is an MTC with $\FPdim(\C)\equiv 2 \pmod 4$, then $\C$ factorizes as $\mathcal C\cong \widetilde{\mathcal C} \boxtimes \sem$, where $\widetilde{\mathcal C}$ is an odd-dimensional MTC and $\sem$ is the rank 2 pointed modular category. We will need the following lemmas.

\begin{lemma}\label{lemma:casesinv}
Let $\mathcal C$ be an MTC with $\FPdim(\C)\equiv 2 \pmod 4.$ Then, exactly one of the following is true,
\begin{enumerate}
    \item $|\mathcal{G(C)}| \equiv 2 \pmod 4$ and $\fpdim(\mathcal C_{\ad})$ is odd or
    \item $|\mathcal{G(C)}|$ is odd and $\fpdim(\mathcal C_{\ad}) \equiv 2 \pmod 4$.
\end{enumerate}
\end{lemma}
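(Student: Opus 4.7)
The proof plan hinges on two pieces of structural information already available. First, the category $\mathcal C$ is integral by \cite[Lemma 4.4]{BPR} and \cite[Corollary 3.2]{DN} as noted in the introduction, so $\FPdim(\mathcal C_{\ad})$ is a positive integer. Second, the universal grading of $\mathcal C$ has trivial component $\mathcal C_{\ad}$ and grading group $\mathcal U(\mathcal C)$, and since $\mathcal C$ is modular we have $\mathcal U(\mathcal C)\cong \mathcal G(\mathcal C)$ by \cite[Theorem 6.3]{GN}. Combining these with the formula $\FPdim(\mathcal C)=|\mathcal U(\mathcal C)|\cdot \FPdim(\mathcal C_{\ad})$ coming from faithful grading gives the key identity
\begin{equation*}
    \FPdim(\mathcal C) \;=\; |\mathcal G(\mathcal C)|\cdot \FPdim(\mathcal C_{\ad}),
\end{equation*}
expressing $\FPdim(\mathcal C)$ as a product of two positive integers.

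Once this is in place, the lemma reduces to a purely elementary observation about integer products modulo $4$. I would argue by case analysis on the parities of $a:=|\mathcal G(\mathcal C)|$ and $b:=\FPdim(\mathcal C_{\ad})$ given that $ab\equiv 2\pmod 4$. If both $a$ and $b$ are odd, then $ab$ is odd, contradicting $ab\equiv 2\pmod 4$. If both are even, then $4\mid ab$, again a contradiction. Hence exactly one of $a,b$ is even. Furthermore, the even one cannot be divisible by $4$: if, say, $4\mid a$, then $4\mid ab$, contradicting $ab\equiv 2\pmod 4$; hence the even factor is $\equiv 2\pmod 4$. This yields precisely the two mutually exclusive possibilities (1) and (2) listed in the statement.

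I expect no substantive obstacle: the content is entirely the identification $\FPdim(\mathcal C)=|\mathcal G(\mathcal C)|\cdot \FPdim(\mathcal C_{\ad})$ together with a short mod-$4$ case check. The only point requiring a brief word is invoking integrality to guarantee that $\FPdim(\mathcal C_{\ad})$ is an integer (so that the modular arithmetic is meaningful), which is immediate from the cited results.
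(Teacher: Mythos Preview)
Your proposal is correct and follows exactly the paper's approach: the paper's proof is the one-line observation that the statement follows from reducing the identity $\FPdim(\mathcal C)=|\mathcal G(\mathcal C)|\cdot \FPdim(\mathcal C_{\ad})$ modulo $4$. Your version simply spells out the provenance of this identity (universal grading plus $\mathcal U(\mathcal C)\cong \mathcal G(\mathcal C)$) and the elementary mod-$4$ case analysis in more detail, but the content is identical.
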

\begin{proof}
The statement follows from taking equivalence modulo 4 on the equality  $\fpdim(\mathcal C) = |\mathcal{G(C)}|\fpdim(\mathcal C_{\textrm{ad}}).$  
\end{proof}

\begin{corollary}\label{odd rank group adjoint}
Let $\mathcal C$ be an MTC with $\FPdim(\C)\equiv 2 \pmod 4.$ Then $\mathcal G(\mathcal C_{\ad})$ has odd order.
\end{corollary}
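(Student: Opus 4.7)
The plan is to split into the two cases provided by Lemma~\ref{lemma:casesinv} and, in each one, derive oddness of $|\mathcal{G}(\mathcal{C}_{\mathrm{ad}})|$ from an elementary divisibility observation. Recall first that $\mathcal{C}$, and hence $\mathcal{C}_{\mathrm{ad}}$, is integral (by the introduction's reminder that $\FPdim(\mathcal{C}) \equiv 2 \pmod 4$ forces integrality), so in particular $\FPdim((\mathcal{C}_{\mathrm{ad}})_{\mathrm{pt}}) = |\mathcal{G}(\mathcal{C}_{\mathrm{ad}})|$ is a positive integer.

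In the first case of the lemma, $\FPdim(\mathcal{C}_{\mathrm{ad}})$ is odd. I would then invoke the standard fact that the Frobenius-Perron dimension of a fusion subcategory of an integral fusion category divides the dimension of the ambient category, applied to $(\mathcal{C}_{\mathrm{ad}})_{\mathrm{pt}} \subseteq \mathcal{C}_{\mathrm{ad}}$, to conclude that $|\mathcal{G}(\mathcal{C}_{\mathrm{ad}})|$ divides $\FPdim(\mathcal{C}_{\mathrm{ad}})$ and is therefore odd. In the second case, $|\mathcal{G}(\mathcal{C})|$ is odd. Since every invertible object of $\mathcal{C}_{\mathrm{ad}}$ is in particular an invertible object of $\mathcal{C}$, the inclusion $\mathcal{C}_{\mathrm{ad}} \subseteq \mathcal{C}$ induces a subgroup inclusion $\mathcal{G}(\mathcal{C}_{\mathrm{ad}}) \leq \mathcal{G}(\mathcal{C})$, so $|\mathcal{G}(\mathcal{C}_{\mathrm{ad}})|$ divides the odd integer $|\mathcal{G}(\mathcal{C})|$ and is itself odd.

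There is no real obstacle: the corollary is essentially bookkeeping on top of Lemma~\ref{lemma:casesinv}, and the only additional inputs are the standard divisibility of dimensions for fusion subcategories of an integral category, and the trivial observation that $\mathcal{G}(-)$ is monotone with respect to fusion subcategory inclusions.
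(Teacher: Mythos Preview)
Your proof is correct and follows essentially the same approach as the paper: both split along the two cases of Lemma~\ref{lemma:casesinv} and use, respectively, that $|\mathcal{G}(\mathcal{C}_{\mathrm{ad}})|$ divides the odd number $\FPdim(\mathcal{C}_{\mathrm{ad}})$ and that $\mathcal{G}(\mathcal{C}_{\mathrm{ad}})\leq \mathcal{G}(\mathcal{C})$. You are simply more explicit about the divisibility justifications that the paper leaves as ``trivial.''
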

\begin{proof}
 From Lemma~\ref{lemma:casesinv}, $|\mathcal{G(C)}|$ is either odd or congruent to 2 modulo 4. If the former is true, the statement follows trivially. In the latter case, we have that $\fpdim(\mathcal C_{\textrm{ad}})$ must be odd from Lemma~\ref{lemma:casesinv}, and so $|\mathcal G(\mathcal C_\textrm{ad})|$ is odd. 
\end{proof}

\begin{lemma} \label{svecmod}
    Let $\mathcal C$ be an MTC with $\FPdim(\C)\equiv 2 \pmod 4$ and $|\GC|$ even, and let $f$ be the invertible  of order 2 in $\mathcal C$. Then exactly one of the following holds.
    \begin{itemize}
        \item The fusion subcategory $\mathcal C[f]$ is modular, or
        \item The fusion subcategory $\mathcal C[f]$ is equivalent to $\svecg$ and $f$ is a fermion.
    \end{itemize}
\end{lemma}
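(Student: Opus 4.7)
The plan is to first enumerate the possible rank-$2$ pointed ribbon fusion categories, identify which are modular versus symmetric, and then use Müger's centralizer theorem to exclude the Tannakian symmetric possibility. Since $f$ is invertible of order $2$, the subcategory $\mathcal C[f]$ has simple objects $\{\mathbf 1,f\}$ with $\FPdim(\mathcal C[f])=2$. Evaluating the ribbon identity $\theta_{f\otimes f}=(\theta_f\otimes\theta_f)\,c_{f,f}\circ c_{f,f}$ at $f\otimes f\cong\mathbf 1$ forces $1=\theta_f^{2}c_{f,f}^{2}$, so the twist $\theta_f$ is a fourth root of unity. The classification of rank-$2$ pointed ribbon fusion categories (in terms of pre-metric groups on $\Z_2$) then yields four possibilities up to braided equivalence: $\sem$ and $\overline{\sem}$ (both modular, $\theta_f=\pm i$), $\Rep(\Z_2)$ (Tannakian, $\theta_f=1$), and $\svecg$ (fermion, $\theta_f=-1$). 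The first two correspond to case~(1) of the lemma, and $\svecg$ corresponds to case~(2).

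It remains to rule out $\mathcal C[f]\cong\Rep(\Z_2)$. Suppose for contradiction that this holds; since $\Rep(\Z_2)$ is symmetric, we have $\mathcal C[f]\subseteq\mathcal C[f]'$. Müger's centralizer theorem applied to the modular category $\mathcal C$ gives
\[
\FPdim(\mathcal C[f])\cdot\FPdim(\mathcal C[f]')=\FPdim(\mathcal C).
\]
Writing $\FPdim(\mathcal C)=2m$ with $m$ odd (using $\FPdim(\mathcal C)\equiv 2\pmod 4$), we obtain $\FPdim(\mathcal C[f]')=m$. Then Lagrange divisibility for fusion subcategories applied to $\mathcal C[f]\subseteq\mathcal C[f]'$ forces $2=\FPdim(\mathcal C[f])$ to divide $\FPdim(\mathcal C[f]')=m$, contradicting the parity of $m$. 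Hence $\mathcal C[f]\not\cong\Rep(\Z_2)$. The two remaining cases are mutually exclusive since modular categories have non-degenerate braiding while $\svecg$ is symmetric, establishing the stated dichotomy.

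The main obstacle is the first step: verifying that the four types listed above exhaust the rank-$2$ pointed ribbon fusion categories and matching each one to the correct ambient structure (semion, anti-semion, $\Rep(\Z_2)$, or $\svecg$). This is essentially standard via the theory of pre-metric groups, but it requires careful bookkeeping of the ribbon axioms and conventions. Once the classification is in hand, the Müger--Lagrange argument in the second paragraph is a short computation using only the modularity of $\mathcal C$ and the congruence hypothesis on $\FPdim(\mathcal C)$.
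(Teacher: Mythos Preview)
Your argument is correct and follows the same overall shape as the paper's proof: list the possible braided structures on the rank-$2$ pointed subcategory, and then eliminate the Tannakian option $\Rep(\Z_2)$. The paper does the first step more coarsely---it simply observes that $\mathcal C[f]\cap\mathcal C[f]'$ is either $\vecg$ (modular) or $\mathcal C[f]$ (symmetric, hence $\Rep(\Z_2)$ or $\svecg$)---while you run through the pre-metric group classification; both are fine. For the elimination step the paper quotes \cite[Section~4]{N2} to get $4\mid\FPdim(\mathcal C)$, whereas you unpack that citation into the explicit M\"uger/Lagrange computation.

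There is one point worth noticing: your elimination argument uses only that $\mathcal C[f]$ is \emph{symmetric}, not that it is Tannakian. Indeed, $\svecg$ is symmetric as well, so if $\mathcal C[f]\cong\svecg$ you would equally have $\mathcal C[f]\subseteq\mathcal C[f]'$, $\FPdim(\mathcal C[f]')=m$ odd, and $2\mid m$, the same contradiction. Hence your computation actually shows that the second alternative in the lemma never occurs and $\mathcal C[f]$ is always modular. This is stronger than the lemma as stated and in fact yields the first assertion of Theorem~\ref{evenfac} directly, bypassing the $\Z_2$-grading and fermion analysis the paper uses there. So your route is not only correct but buys a genuine shortcut; you should just rephrase the conclusion so the reader sees that the dichotomy collapses to the modular case.
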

\begin{proof}
 Consider the fusion subcategory $\mathcal C[f]$ generated by $f$. If $\mathcal C[f] \cap \mathcal C[f]' = \vecg$, then $\mathcal C[f]$ is modular and the claim holds in this case. 

On the other hand, suppose $\mathcal C[f] \cap \mathcal C[f]'$ is non-trivial. Then, since the order of $f$ is 2, we have that $\mathcal C[f] \cap \mathcal C[f]' = \mathcal C[f]$. Thus, 
$\mathcal C[f]$ is a symmetric subcategory of $\mathcal C$. Suppose first that $\mathcal C[f]$ is Tannakian and is equivalent to $\Rep(\mathbb Z_2)$. Then, from \cite[Section 4]{N2}, we would have that $4$ divides $\fpdim(\mathcal C)$, which is not possible. So, $\mathcal C[f]$ must be equivalent to $\svecg$, and we then have that $f$ is a fermion \cite[Definition 2.1]{BGHNPRW}. So, the lemma is proven.
\end{proof}

\begin{lemma} \label{c}
 Let $\mathcal C$ be an MTC with $\FPdim(\C)\equiv 2 \pmod 4$ such that  $\mathcal{G(C)} \cong \mathbb Z_2$. Then $\mathcal C_{\ad}$ is an odd-dimensional MTC and we have a decomposition $\mathcal C\cong \mathcal C_{\ad}  \boxtimes \sem,$ where $\sem$ is the rank 2 pointed modular category.  
\end{lemma}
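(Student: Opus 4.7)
The plan is to show that $\Cpt=\C[f]$ is a modular subcategory of $\C$ and then invoke M\"uger's theorem. Since $|\GC|=2\equiv 2\pmod 4$, Lemma~\ref{lemma:casesinv} places us in case (1), so $\fpdim(\Cad)$ is odd; by Corollary~\ref{odd rank group adjoint}, the group $\mathcal G(\Cad)$ also has odd order, and consequently the order-$2$ invertible $f$ cannot lie in $\Cad$. Equivalently, $\Cpt\cap\Cad=\vecg$.

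To show $\Cpt$ is modular I would appeal to Lemma~\ref{svecmod}, which leaves only two possibilities for $\C[f]$: either it is modular (in which case this step is complete), or it is equivalent to $\svecg$. The second alternative cannot arise here: it would make $\C[f]$ symmetric, hence $\C[f]\subseteq(\C[f])'$. But in any MTC the standard identity $(\Cpt)'=\Cad$ (which follows from \cite[Theorem 6.3]{GN} together with the M\"uger dimension formula $\fpdim(\mathcal D)\fpdim(\mathcal D')=\fpdim(\C)$) would then force $f\in\Cad$, contradicting the previous paragraph.

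Hence $\Cpt$ is modular, and M\"uger's decomposition theorem yields $\C\cong\Cpt\boxtimes(\Cpt)'=\Cpt\boxtimes\Cad$. The factor $\Cpt$ is a pointed modular category of rank $2$, hence equivalent to $\sem$, and $\Cad$, being a tensor factor of the MTC $\C$, is itself modular with odd Frobenius--Perron dimension as already established.

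The one nontrivial step is the modularity of $\Cpt$; this is precisely where the hypothesis $\fpdim(\C)\equiv 2\pmod 4$ enters, via the chain Lemma~\ref{lemma:casesinv} $\Rightarrow$ Corollary~\ref{odd rank group adjoint} $\Rightarrow$ $f\notin\Cad$ that rules out the $\svecg$ alternative. Everything downstream is a direct application of M\"uger's factorization theorem together with the (essentially trivial) classification of rank-$2$ pointed modular categories.
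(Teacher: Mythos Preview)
Your proof is correct and follows essentially the same line as the paper's: both arguments use Corollary~\ref{odd rank group adjoint} to conclude that $\mathcal G(\Cad)$ has odd order, hence is trivial inside $\GC\cong\mathbb Z_2$, so that $\Cpt\cap\Cad=\vecg$; from $(\Cpt)'=\Cad$ this gives modularity of $\Cpt$ and the M\"uger factorization. The paper packages the last step as a citation to \cite[Lemma 5.2]{CP}, while you unpack it directly; note, however, that your appeal to Lemma~\ref{svecmod} is an unnecessary detour, since once you know $(\Cpt)'=\Cad$ and $f\notin\Cad$ the identity $\Cpt\cap(\Cpt)'=\vecg$ is immediate without any case analysis.
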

\begin{proof}
 By Corollary~\ref{odd rank group adjoint}, $\mathcal G(\mathcal C_{\textrm{ad}})$ is an odd order subgroup of $\mathcal{G(C)} \cong \mathbb Z_2$, and so it must be trivial. By the proof of \cite[Lemma 5.2]{CP}, this implies that $\mathcal C_{\ad}$ and $\mathcal C_{\pt}$ are modular, and we have the factorization $\mathcal C\cong\mathcal C_{\ad}  \boxtimes  \mathcal C_{\pt}$. Lastly, by Lemma \ref{lemma:casesinv}, $\Cad$ is odd-dimensional.
\end{proof}

Recall that if $|\mathcal G(\mathcal C)| = 2$, then $\mathcal C[f]$ is modular, see Lemma~\ref{c}. 
This is also the case in the more general setting when the group of invertibles has even order. 
In the following theorem, we show that when the group of invertibles has even order, the category can always be determined in terms of an odd-dimensional modular category. 

\begin{theorem}\label{evenfac}
Let $\mathcal C$ be an MTC with $\FPdim(\C)\equiv 2 \pmod 4$ and $|\GC|$ even. Then, the pointed subcategory of rank 2 is a modular subcategory of $\mathcal C$. Moreover, we have that $\mathcal C\cong \widetilde{\mathcal C} \boxtimes \sem$, where $\widetilde{\mathcal C}$ is an odd-dimensional modular category and $\sem$ is the rank 2 pointed modular category.
\end{theorem}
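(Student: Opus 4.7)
The plan is to use Lemma \ref{svecmod} to analyze the rank two pointed subcategory $\mathcal{C}[f]$ generated by the order-two invertible $f$, show that it is in fact modular (by excluding the $\svecg$ alternative), and then apply M\"uger's decomposition theorem to obtain $\mathcal{C}\cong \mathcal{C}[f]\boxtimes \mathcal{C}[f]'$. Lemma \ref{svecmod} presents a dichotomy: either $\mathcal{C}[f]$ is modular, in which case it must be the rank two pointed modular category $\sem$, or $\mathcal{C}[f]\cong\svecg$ with $f$ a fermion. The whole argument reduces to ruling out the second alternative.

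To exclude the fermion case, I would invoke the dimension formula for M\"uger centralizers in an MTC, namely $\FPdim(\mathcal{C}[f])\cdot\FPdim(\mathcal{C}[f]')=\FPdim(\mathcal{C})$. Since $\FPdim(\mathcal{C}[f])=2$ and $\FPdim(\mathcal{C})\equiv 2\pmod 4$, this forces $\FPdim(\mathcal{C}[f]')=\FPdim(\mathcal{C})/2$ to be odd. Now if $\mathcal{C}[f]$ were equivalent to $\svecg$, it would be symmetric and hence contained in its own centralizer $\mathcal{C}[f]\subseteq\mathcal{C}[f]'$, which would force $2=\FPdim(\mathcal{C}[f])$ to divide $\FPdim(\mathcal{C}[f]')$, contradicting oddness. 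Therefore $\mathcal{C}[f]$ must be modular, and consequently $\mathcal{C}[f]\cong\sem$.

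With modularity of $\mathcal{C}[f]$ in hand, M\"uger's decomposition theorem for modular subcategories yields $\mathcal{C}\cong\mathcal{C}[f]\boxtimes\mathcal{C}[f]'\cong\sem\boxtimes\widetilde{\mathcal{C}}$, where $\widetilde{\mathcal{C}}:=\mathcal{C}[f]'$ is automatically modular as the centralizer of a modular subcategory inside a modular category. The centralizer dimension formula has already given $\FPdim(\widetilde{\mathcal{C}})=\FPdim(\mathcal{C})/2$, which is odd, so $\widetilde{\mathcal{C}}$ is the odd-dimensional modular factor required by the statement. Uniqueness of the rank two pointed modular subcategory is then automatic, since $\widetilde{\mathcal{C}}$ has odd dimension and therefore no invertibles of even order, so every order-two invertible of $\mathcal{C}$ must lie in the $\sem$ factor.

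The main obstacle is the dimension-based exclusion of the fermion case in the second paragraph; the rest is standard centralizer bookkeeping once that step is secured. The hypothesis $\FPdim(\mathcal{C})\equiv 2\pmod 4$ enters precisely there, because it is exactly what forces $\FPdim(\mathcal{C})/2$ to be odd and thereby prevents the inclusion $\svecg\subseteq \mathcal{C}[f]'$. Without this congruence the fermion scenario would survive and the clean factorization into $\sem$ and an odd-dimensional factor would break down.
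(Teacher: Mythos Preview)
Your proof is correct and in fact more direct than the paper's. Both arguments invoke Lemma~\ref{svecmod} to reduce to excluding the $\svecg$ alternative, but the paper then takes a considerably longer route: it uses the fermionic $\mathbb{Z}_2$-grading of \cite[Proposition~2.4]{BGHNPRW} attached to the fermion $f$, analyzes how invertibles distribute across the two components, concludes that every invertible lies in the trivial component so that $s_{f,h}=1$ for all invertibles $h$, and hence $f\in(\Cpt)'=\Cad$, contradicting Corollary~\ref{odd rank group adjoint}. Your argument bypasses this machinery entirely: if $\mathcal{C}[f]$ were symmetric then $\mathcal{C}[f]\subseteq\mathcal{C}[f]'$, while M\"uger's dimension formula gives $\FPdim(\mathcal{C}[f]')=\FPdim(\mathcal{C})/2$, which is odd; divisibility of Frobenius--Perron dimensions for fusion subcategories then yields an immediate contradiction. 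This is more elementary (no fermionic grading theory, no appeal to Corollary~\ref{odd rank group adjoint}) and makes the role of the hypothesis $\FPdim(\C)\equiv 2\pmod 4$ completely transparent. Indeed, the same observation would already dispose of the Tannakian alternative inside the proof of Lemma~\ref{svecmod} without invoking \cite{N2}.
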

\begin{proof}
 Consider first when $|\mathcal G(\mathcal C)| = 2$. 
Then, by Lemma~\ref{c}, we have that $\mathcal C \cong \mathcal C_{\ad} \boxtimes \mathcal C_{\pt}$, with $\mathcal C_{\pt}$ the rank 2 pointed modular category, that is, $\mathcal C_{\pt} \cong \sem$. Moreover, $\mathcal C_{\ad}$ is an odd-dimensional modular category in this case. 
    
Now, consider when $|\mathcal G(\mathcal C)| > 2$. From Lemma~\ref{svecmod}, there is an invertible $f$ of order 2 and $\mathcal C[f]$ is either modular or equivalent to $\svecg$. If it is modular, then the claim holds so suppose the latter. 
    
From \cite[Proposition 2.4]{BGHNPRW}, $f$ is a fermion, and there is a faithful $\mathbb Z_2$-grading of $\mathcal C\cong \mathcal C_0 \oplus \mathcal C_1$ such that a simple object $X \in \mathcal C_0$ if $\epsilon_X = 1$ and $X \in \mathcal C_1$ if $\epsilon_X = -1$. For the definition of $\epsilon_X$, see \cite[Proposition 2.3(ii)]{BGHNPRW}.

Suppose first that $\mathcal C_1$ contains at least one invertible object $h$. In a similar argument as in \cite[Proposition 3.17]{CGP}, we have that $\mathcal C_0$ and $\mathcal C_1$ contain the same number of invertibles. Suppose the order of $h \in \mathcal C_1$ is $n$ and so $h^n \in \mathcal C_0$ and $n$ must be even, which follows a similar argument as the one in the proof of \cite[Theorem 4.1(i)]{N1}. Therefore, the order of all invertibles in $\mathcal C_1$ must be even.
Since $\mathcal C_0$ and $\mathcal C_1$ have the same number of invertible objects, all invertible objects of even order should be in $\mathcal C_1$. However, from \cite[Proposition 2.3(ii)]{BGHNPRW}, we have that $f \in \mathcal C_0$, and so, this is impossible. 

We then have that all invertible objects are in $\mathcal C_0$, and that $\epsilon_h = 1$ for all invertible objects $h$ in $\mathcal C$. So, from \cite[Proposition 2.3(ii)]{BGHNPRW}, we have that 
    \[s_{f,h} = \epsilon_h\fpdim(h) = 1\] for all invertibles $h$.

So, $f \in (\mathcal C_{\pt})' \cong \mathcal C_{\ad}$. However, recall from Lemma~\ref{odd rank group adjoint} that $|\mathcal G(\mathcal C_{\ad})|$ has odd order. Thus, a contradiction is derived in this case and the theorem is proven.
\end{proof}

\begin{remark}From \cite[Corollary 8.2]{NS}, $\tilde \C$ in the theorem above is MNSD, i.e. all non-trivial simple objects in $\tilde \C$ are non-self dual.
    Hence, if $\mathcal C$ is an MTC of dimension congruent to 2 modulo 4, it follows from the previous theorem that  $\mathcal C$ has exactly two self-dual objects, namely, the unit and the invertible of order 2.
\end{remark}

\begin{theorem} \label{de-equivodd}
    Let $\mathcal C$ be an MTC with $\fpdim(\C)\equiv 2 \pmod 4$, and suppose that $(\mathcal C_{\ad})_{\pt} \cong \Rep(G)$ for a non-trivial odd-order group $G$. Then, $(\mathcal C_{\ad})_G$ contains an odd number of invertibles.     
\end{theorem}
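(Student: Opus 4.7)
The plan is to split into the two cases provided by Lemma~\ref{lemma:casesinv}: either (i) $|\mathcal G(\mathcal C)|$ is even with $\FPdim(\mathcal C_{\ad})$ odd, or (ii) $|\mathcal G(\mathcal C)|$ is odd with $\FPdim(\mathcal C_{\ad})\equiv 2\pmod 4$. Set $\mathcal D:=(\mathcal C_{\ad})_G$; by the discussion at the end of Section~\ref{section: equivariantization} (using that $(\mathcal C_{\ad})_{\pt}\cong\Rep(G)$ is odd-dimensional and hence Tannakian), $\mathcal D$ is modular with $\FPdim(\mathcal D)=\FPdim(\mathcal C_{\ad})/|G|$.

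Case (i) is immediate: $\FPdim(\mathcal D)$ is odd, so $|\mathcal G(\mathcal D)|$ is odd, since it divides the dimension of $\mathcal D$.

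For case (ii), write $\FPdim(\mathcal C_{\ad})=2m$ with $m$ odd; since $|G|=\FPdim(\Rep(G))$ is an odd divisor of $2m$, it must divide $m$, and therefore $\FPdim(\mathcal D)\equiv 2\pmod 4$. I would then argue by contradiction: assume $|\mathcal G(\mathcal D)|$ is even, and apply Theorem~\ref{evenfac} to decompose $\mathcal D\cong\widetilde{\mathcal D}\boxtimes\sem$ with $\widetilde{\mathcal D}$ odd-dimensional. Since $\mathcal G(\mathcal D)\cong\mathcal G(\widetilde{\mathcal D})\times\mathbb Z_2$ has a unique element of order two, namely the generator $f$ of $\sem$, the $G$-action on $\mathcal D$ must fix $f$ and so stabilize the subcategory $\mathcal D[f]\cong\sem$.

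The main obstacle is showing that the induced $G$-action on $\sem$ is trivializable, so that $\sem$ embeds as a braided fusion subcategory of $\sem^G\subseteq\mathcal D^G=\mathcal C_{\ad}$. I would handle this by observing that every braided tensor auto-equivalence of the rank-two category $\sem$ is isomorphic to the identity, so the remaining coherence data of the $G$-action is classified up to equivalence by an element of $H^2(G,\widehat{\mathcal G(\sem)})=H^2(G,\mathbb Z_2)$; this group vanishes because $|G|$ is odd. With this trivialization in place, the copy of $f$ sitting inside $\sem\subseteq\mathcal C_{\ad}$ is an invertible of order two in $\mathcal C_{\ad}$, contradicting Corollary~\ref{odd rank group adjoint}, which forces $|\mathcal G(\mathcal D)|$ to be odd.
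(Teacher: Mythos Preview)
Your argument is correct and follows essentially the same route as the paper's proof: both split off the trivial case where $\FPdim(\mathcal C_{\ad})$ is odd, then in the remaining case assume $|\mathcal G(\mathcal D)|$ is even, invoke Theorem~\ref{evenfac} to produce the $\sem$ factor, and observe that the odd-order group $G$ must act trivially on it. The only difference is in the final contradiction: the paper applies the exact sequence $1\to\widehat{G}\to\mathcal G(\mathcal C_{\ad})\to G_0(\mathcal D)\to 1$ from \cite{BN} to force $G_0(\mathcal D)=1$ (which is impossible since the semion lies in $G_0(\mathcal D)$), whereas you give a more self-contained argument via the vanishing of $H^2(G,\mathbb Z_2)$ to lift the semion to an order-two invertible in $\mathcal C_{\ad}$ and then appeal to Corollary~\ref{odd rank group adjoint}; these are two phrasings of the same obstruction.
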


\begin{proof}
    If $\mathcal C_{\ad}$ is odd-dimensional, the claim immediately follows. So, suppose that $\mathcal C_{\ad}$ has dimension congruent to 2 modulo 4. Then, $(\mathcal C_{\ad})_G$ is an MTC of dimension congruent to 2 modulo 4, as well. For the sake of contradiction, suppose there are an even number of invertibles in $\mathcal D := (\mathcal C_{\ad})_G$. In particular, we have that $|\mathcal G(\mathcal D)| \equiv 2 \pmod{4}$. Thus, $\mathcal D$ has exactly one invertible $h$ of order 2. Moreover, it follows from Theorem \ref{evenfac} that $\mathcal D \cong \operatorname{semion} \boxtimes \widetilde{D}$, for $\widetilde{D}$ an odd-dimensional MTC and $\sem$ the rank 2 pointed modular category. Notice that the action of $G$ on the $\operatorname{semion}$ is trivial on objects (and morphisms) since the order of $G$ is odd. 

    Now, we can consider the exact sequence given in \cite[Remark 3.1]{BN} that describes the invertibles in the $G$-equivariantization. Since $G$ is an abelian group, the exact sequence in this case is given by
    \[1\to \widehat{G} \to  \mathcal G (\mathcal C_{\ad}) \to G_0(\mathcal D)\to 1,\]
where $\hat G$ denotes the group of invertible characters of $G$ and $G_0(\mathcal D)$ 
the subgroup of $\mathcal G(\mathcal D)$ consisting of isomorphism classes of $G$-equivariant invertible objects. 
Furthermore, since $G\cong \mathcal G(\mathcal C_{\ad})$ is abelian and the sequence above is exact, it follows that $G_0(\mathcal D)$ should be trivial, which contradicts that the action of $G$ on the semion is trivial.
\end{proof}

   \begin{theorem}\label{gceven}
        Let $\mathcal C$ be an MTC with $\FPdim(\C)\equiv 2 \pmod 4$. Then, $|\mathcal G(\mathcal C)|$ is even.
    \end{theorem}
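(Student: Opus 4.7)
The plan is to argue by contradiction using a minimal counterexample on Frobenius--Perron dimension. Assume there is an MTC $\C$ with $\fpdim(\C) \equiv 2 \pmod 4$ but $|\GC|$ odd, and among all such choose $\C$ with $\fpdim(\C)$ minimal. The case $|\GC|=1$ is immediately excluded, since then $\C$ would be perfect of dimension $\equiv 2 \pmod 4$, contradicting \cite[Corollary 10.11]{BP}. So $|\GC|\ge 3$ is odd, and Lemma~\ref{lemma:casesinv} forces $\fpdim(\Cad) \equiv 2 \pmod 4$. Moreover $\mathcal G(\Cad)$ is a subgroup of $\GC$, hence of odd order, so $\Cadpt$ is a pointed symmetric category of odd Frobenius--Perron dimension, and by \cite[Corollary 2.50]{DGNO1} it is Tannakian: $\Cadpt \cong \Rep(G)$ for some odd-order finite group $G$.

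I would then split on whether $G$ is trivial. If $G=1$, then $\mathcal G(\Cad)=1$, so the M\"uger center of $\Cad$, namely $\Cad \cap \Cpt = \Cadpt$, is trivial. By the argument in the proof of Lemma~\ref{c} (using \cite[Lemma 5.2]{CP}), $\Cad$ and $\Cpt$ are both modular subcategories and $\C \cong \Cad \boxtimes \Cpt$. But then $\Cad$ is itself a perfect MTC with $\fpdim(\Cad)\equiv 2\pmod 4$, once again contradicting \cite[Corollary 10.11]{BP}.

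If $G$ is non-trivial, then by the final remark of Section~\ref{section: equivariantization}, the de-equivariantization $\mathcal D := (\Cad)_G$ is a modular tensor category with $\fpdim(\mathcal D) = \fpdim(\Cad)/|G|$. Writing $\fpdim(\Cad)=2k$ with $k$ odd, since $|G|$ is odd and divides $2k$ one gets $|G|\mid k$, and then $k/|G|$ is an odd positive integer, so $\fpdim(\mathcal D)\equiv 2 \pmod 4$. By Theorem~\ref{de-equivodd}, $|\mathcal G(\mathcal D)|$ is odd. Thus $\mathcal D$ is an MTC violating the conclusion of the theorem of strictly smaller Frobenius--Perron dimension than $\C$ (as $|G|>1$), contradicting minimality.

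The main obstacle I anticipate is verifying the hypotheses needed to invoke Theorem~\ref{de-equivodd} and the modularity of $(\Cad)_G$, namely that $\Cadpt$ is Tannakian of odd non-trivial order, which is exactly the case $G\ne 1$ above. The rest is a short descent whose base cases are either $|\GC|=1$, directly contradicting \cite[Corollary 10.11]{BP}, or $\mathcal G(\Cad)=1$, which reduces via the factorization $\C\cong\Cad\boxtimes\Cpt$ to the same non-perfectness obstruction applied to $\Cad$.
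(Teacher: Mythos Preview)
Your proposal is correct and follows essentially the same route as the paper's proof: a minimal counterexample on $\fpdim$, the case $|\mathcal G(\Cad)|=1$ handled via the factorization $\C\cong\Cad\boxtimes\Cpt$ and \cite[Corollary 10.11]{BP}, and the case $|\mathcal G(\Cad)|>1$ handled by de-equivariantizing $\Cad$ and invoking Theorem~\ref{de-equivodd} to contradict minimality. The only cosmetic difference is that you separately exclude $|\GC|=1$ at the outset, whereas the paper folds this into the $|\mathcal G(\Cad)|=1$ case after noting $\fpdim(\C)\geq 6$.
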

    
    \begin{proof}
        Suppose, for the sake of contradiction, that there is an MTC with dimension congruent to 2 modulo 4 and an odd number of invertibles, and let $\mathcal C$ be such an MTC of the smallest dimension. Notice that $\mathcal C$ must have dimension greater than 2 because if not, then $\mathcal C$ would be pointed, which would imply that $|\mathcal G(\mathcal C)|$ is even. So, we must have that $\fpdim(\mathcal C) \geq 6$. 
        
        If $|\mathcal G(\mathcal C_{\ad})| = 1$, then recall by the proof of \cite[Lemma 5.2]{CP} that $\mathcal C_{\ad}$ is an MTC and we have a factorization $\mathcal C \cong \mathcal C_{\ad} \boxtimes \mathcal C_{\pt}$. By assumption, $\mathcal C_{\pt}$ is odd-dimensional, and so $\mathcal C_{\ad}$ is an MTC of dimension congruent to 2 modulo 4. This contradicts that $|\mathcal G(\mathcal C_{\ad})| = 1$ since MTCs of dimension congruent to 2 modulo 4 have at least one non-trivial invertible object, see \cite[Corollary 10.11]{BP}.

        Hence, we should have that $|\mathcal G(\mathcal C_{\ad})| > 1.$ Consider the de-equivariantization of $\mathcal C_{\ad}$ by $G:=\mathcal G(\mathcal C_{\ad})$. Since $|\mathcal G(\mathcal C)|$ is odd, we must  have that $\fpdim(\mathcal C_{\ad}) \equiv 2 \pmod{4}$ from Lemma~\ref{lemma:casesinv}. Notice that $(\mathcal C_{\ad})_G$ is an MTC with  $\fpdim((\mathcal C_{\ad})_G) \equiv 2 \pmod{4}$, and since $G$ is not trivial, we get that  $\fpdim((\mathcal C_{\ad})_G) < \fpdim(\mathcal C)$. Again, since by assumption $\mathcal C$ is an MTC of the smallest dimension with $|\mathcal{G(C)}|$ odd, $(\mathcal C_{\ad})_{G}$ must have an even number of invertibles, but this is not possible from Theorem~\ref{de-equivodd}.

        So, neither cases are possible, and the theorem is proven.
    \end{proof}

    \begin{corollary} \label{class}
        Let $\mathcal C$ be an MTC with $\FPdim(\C)\equiv 2 \pmod 4$. Then, $\mathcal C\cong \widetilde{\mathcal C} \boxtimes \sem$, where $\widetilde{\mathcal C}$ is an odd-dimensional modular category and $\sem$ is the rank 2 pointed modular category.
    \end{corollary}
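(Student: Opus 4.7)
The corollary is essentially a direct combination of the two preceding theorems, so the plan is to just chain them together. The plan is to first invoke Theorem \ref{gceven} to upgrade the hypothesis, and then feed the result into Theorem \ref{evenfac}.

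More precisely, I would proceed as follows. Start with an arbitrary MTC $\mathcal C$ satisfying $\FPdim(\C)\equiv 2 \pmod 4$. By Theorem \ref{gceven}, the group of invertibles $\mathcal G(\mathcal C)$ necessarily has even order. This is exactly the extra hypothesis required by Theorem \ref{evenfac}, so applying that theorem to $\mathcal C$ immediately yields the decomposition $\mathcal C \cong \widetilde{\mathcal C}\boxtimes \sem$ with $\widetilde{\mathcal C}$ an odd-dimensional modular category, which is precisely the claim.

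There is no real obstacle here, since all the work has been done: Theorem \ref{evenfac} handled the factorization assuming $|\GC|$ is even, and Theorem \ref{gceven} removed the parity hypothesis on $|\GC|$ by the equivariantization argument built on Theorem \ref{de-equivodd}. The only thing to note is that the two theorems were proved in the correct logical order (Theorem \ref{evenfac} is used inside the proof of Theorem \ref{gceven} via Theorem \ref{de-equivodd}), so no circularity arises when we combine them here. I would therefore write the proof as a single short paragraph observing that the hypothesis of Theorem \ref{evenfac} is automatically satisfied by Theorem \ref{gceven}, and the conclusion follows.
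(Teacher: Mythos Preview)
Your proposal is correct and matches the paper's own proof essentially verbatim: the paper also just invokes Theorem~\ref{gceven} to obtain that $|\mathcal G(\mathcal C)|$ is even and then applies Theorem~\ref{evenfac}. Your remark about the non-circularity of the logical dependencies is accurate and even slightly more detailed than what the paper records.
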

    \begin{proof}
        From Theorem~\ref{gceven}, we must have that $|\mathcal G(\mathcal C)|$ is even, and the claim follows directly from Theorem~\ref{evenfac}.
    \end{proof}

\begin{corollary}
    MTCs with Frobenius-Perron dimension congruent to 2 modulo 4 and rank up to 46 are pointed.
\end{corollary}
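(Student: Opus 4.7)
The plan is to combine the factorization theorem (Corollary~\ref{class}) with the known classification of low-rank odd-dimensional modular categories. By Corollary~\ref{class}, any MTC $\mathcal{C}$ with $\FPdim(\mathcal{C}) \equiv 2 \pmod 4$ admits a decomposition $\mathcal{C} \cong \widetilde{\mathcal{C}} \boxtimes \sem$, where $\widetilde{\mathcal{C}}$ is an odd-dimensional modular category. Since the rank is multiplicative under the Deligne tensor product and $\sem$ has rank $2$, we get $\rank(\mathcal{C}) = 2 \cdot \rank(\widetilde{\mathcal{C}})$.

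So if $\rank(\mathcal{C}) \leq 46$, then $\rank(\widetilde{\mathcal{C}}) \leq 23$. The plan is then to invoke the classification of odd-dimensional MTCs up to rank $23$ from \cite{BR, CP, CGP}, which states that every such category is pointed. Thus $\widetilde{\mathcal{C}}$ is pointed, and the conclusion follows since a Deligne tensor product of two pointed fusion categories is again pointed (every simple object of $\widetilde{\mathcal{C}} \boxtimes \sem$ is of the form $X \boxtimes Y$ with both $X$ and $Y$ invertible, hence invertible).

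There is no real obstacle here: the only thing to verify is that the rank bound $46$ in the statement matches the multiplicative bound $2 \cdot 23 = 46$ coming from the classification of odd-dimensional MTCs, which is exactly the reason the number $46$ appears. The argument is a short application of Corollary~\ref{class} together with a citation of the known classification results.
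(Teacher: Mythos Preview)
Your proposal is correct and follows exactly the same approach as the paper: apply Corollary~\ref{class} to factor $\mathcal{C}\cong \widetilde{\mathcal{C}}\boxtimes\sem$, use multiplicativity of rank to bound $\rank(\widetilde{\mathcal{C}})\le 23$, and then invoke the classification of odd-dimensional MTCs up to rank $23$ from \cite{BR, CP, CGP}. The paper's proof is just a one-line citation of these ingredients, while you have spelled out the rank and pointedness bookkeeping explicitly.
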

\begin{proof}
    This follows from Corollary \ref{class} and the classification of odd-dimensional MTCs up to rank 23, which are all pointed, see \cite{BR, CP, CGP}.
\end{proof}

\begin{remark}\label{rem: mod or tan}
    Let $\mathcal C$ be an MTC of dimension congruent to 2 modulo 4 with $|\mathcal G(\mathcal C)| > 2$. Let $g$ be an invertible of order $p$, for $p$ an odd prime dividing $|\mathcal G(\mathcal C)|$. Then the fusion subcategory $\mathcal C[g]$ is either modular or Tannakian. 
\end{remark} 

\begin{proof}
    If $p$ is a prime dividing $|\mathcal G(\mathcal C)|$, by Cauchy's Theorem, there is an invertible $g$ of order $p$. Since  $|\mathcal G(\mathcal C)| > 2$, we let $p$ be an odd prime. Consider the fusion subcategory $\mathcal C[g]$ generated by $g$. Since the order of $g$ is prime then   $\mathcal C[g] \cap \mathcal C[g]' = \vecg$ or $\mathcal C[g]$. In the first case, $\mathcal C[g]$ is modular. On the other hand, if $\mathcal C[g] \cap \mathcal C[g]' = \mathcal C[g]$, then $\mathcal C[g]$ is an odd-dimensional symmetric subcategory of $\mathcal C$, and must be Tannakian~\cite[Corollary 2.50]{DGNO1}.
\end{proof}

\subsection{Factorization for $p$ odd}

In this subsection, we generalize the factorization result for the case congruent to 2 modulo 4 to any weakly integral MTC and $p$ an odd prime that divides the group of invertibles and has multiplicity one in $\FPdim(\C).$ We also show the existence of pointed modular subcategories under certain conditions on the order of the group of invertibles.

\begin{theorem}\label{factorization for p odd}
    Let $\mathcal C$ be a weakly integral MTC, and $p$ an odd prime dividing $|\GC|.$ Suppose that $p$ has multiplicity one in $\fpdim(\C).$ Then, $\C$ has a pointed modular subcategory of dimension $p$. In particular, $\C\cong \tilde \C \boxtimes \vecg_{\mathbb Z_p}^\chi$ for $\tilde \C$ an MTC of dimension not divisible by $p$ and $\chi$ a non-degenerate quadratic form on $\mathbb Z_p$.
\end{theorem}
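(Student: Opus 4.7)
The plan is to mirror the dichotomy of Remark~\ref{rem: mod or tan} and then invoke Müger's centralizer dimension formula to rule out the Tannakian alternative. By Cauchy's theorem applied to $\mathcal G(\mathcal C)$, there exists an invertible $g \in \mathcal C$ of order $p$. The fusion subcategory $\mathcal C[g]$ generated by $g$ is pointed of rank $p$ with $\fpdim(\mathcal C[g]) = p$. Since $p$ is prime, $\mathcal C[g]$ has no nontrivial proper fusion subcategories, so $\mathcal C[g] \cap \mathcal C[g]'$ equals either $\vecg$ or $\mathcal C[g]$; thus $\mathcal C[g]$ is either modular or symmetric. In the symmetric case, since $p$ is odd, $\mathcal C[g]$ is odd-dimensional and hence Tannakian by \cite[Corollary 2.50]{DGNO1}, so $\mathcal C[g] \cong \Rep(\mathbb Z_p)$.

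The crux of the argument is to rule out the Tannakian alternative using the multiplicity-one hypothesis. If $\mathcal C[g] \cong \Rep(\mathbb Z_p)$ is Tannakian, then it is symmetric and hence $\mathcal C[g] \subseteq \mathcal C[g]'$. By Müger's theorem, in the modular category $\mathcal C$ one has $\fpdim(\mathcal C[g]) \cdot \fpdim(\mathcal C[g]') = \fpdim(\mathcal C)$, whence $\fpdim(\mathcal C[g]') = \fpdim(\mathcal C)/p$. The inclusion $\mathcal C[g] \subseteq \mathcal C[g]'$ then forces $p = \fpdim(\mathcal C[g])$ to divide $\fpdim(\mathcal C)/p$, so $p^2 \mid \fpdim(\mathcal C)$, contradicting the hypothesis that $p$ has multiplicity one in $\fpdim(\mathcal C)$.

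Therefore $\mathcal C[g]$ is modular, and by Müger's decomposition theorem one obtains $\mathcal C \cong \mathcal C[g] \boxtimes \mathcal C[g]'$. As a pointed modular category of prime dimension $p$, the factor $\mathcal C[g]$ is equivalent to $\vecg_{\mathbb Z_p}^\chi$ for a non-degenerate quadratic form $\chi$ on $\mathbb Z_p$, by the standard classification of pointed modular categories. Setting $\tilde{\mathcal C} := \mathcal C[g]'$, one has $\fpdim(\tilde{\mathcal C}) = \fpdim(\mathcal C)/p$, which is not divisible by $p$ precisely because $p$ has multiplicity one in $\fpdim(\mathcal C)$. The main obstacle in the proof is isolating the right contradiction in the Tannakian case; the remaining steps package Cauchy's theorem, the centralizer dichotomy for prime-dimensional subcategories, and Müger's factorization, none of which require additional machinery beyond what has already appeared in the paper.
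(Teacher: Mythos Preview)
Your proof is correct and follows essentially the same approach as the paper: both set up the modular/Tannakian dichotomy for $\mathcal C[g]$ and rule out the Tannakian case by showing it forces $p^2 \mid \fpdim(\mathcal C)$. The only cosmetic difference is that the paper cites \cite[Section 4]{N2} for this divisibility, whereas you derive it directly from M\"uger's centralizer dimension formula together with $\mathcal C[g] \subseteq \mathcal C[g]'$, which is arguably more self-contained.
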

\begin{proof}
 Let $g$ be an invertible of order $p$. Consider the fusion subcategory $\mathcal C[g]$ generated by $g$. If $\mathcal C[g] \cap \mathcal C[g]' = \vecg$, then $\mathcal C[g]$ is modular and the claim holds. Suppose then that $\mathcal C[g] \cap \mathcal C[g]'$ is non-trivial. Since the order of $g$ is prime, we have that $\mathcal C[g] \cap \mathcal C[g]' = \mathcal C[g]$. Thus, 
$\mathcal C[g]$ is a symmetric subcategory of $\mathcal C$, and hence Tannakian since $p$ is odd \cite[Corollary 2.50]{DGNO1}. But then from \cite[Section 4]{N2} we would have that $p^2$ divides $\fpdim(\mathcal C)$, which is a contradiction. So, $\mathcal C[g]$ must be equivalent to $\vecg$, and $\C[g]$ is modular, as desired. 
\end{proof}

\begin{theorem} \label{theo: rel prime}
    Let $\mathcal C$ be an MTC and $G$ be a subgroup of $\GC$ such that $\mathcal G(\mathcal C_{\ad}) \leq G  \leq \GC$ and $|G|$  and $\frac{|\mathcal G(\C)|}{|G|}$ are relatively prime. Then, $\mathcal C$ contains a modular pointed subcategory of dimension $[\mathcal G(\mathcal C):G]$.
\end{theorem}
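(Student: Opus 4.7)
The plan is to take the desired modular pointed subcategory to be the one associated to a direct complement of $G$ in $\mathcal G(\mathcal C)$, and to verify its modularity using the bicharacter on invertibles coming from the squared braiding.

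Since $\mathcal C$ is braided, $A := \mathcal G(\mathcal C)$ is abelian. Let $K := \mathcal G(\mathcal C_{\ad})$, so $K \leq G$ by hypothesis. The coprimality of $|G|$ and $[A:G]$, together with the primary decomposition of the finite abelian group $A$, yields an internal direct sum $A = G \oplus H$, where $H$ is the unique subgroup of $A$ of order $[A:G]$. Let $\mathcal H$ be the pointed fusion subcategory of $\mathcal C_{\pt}$ whose group of invertibles is $H$. Then $\FPdim(\mathcal H) = |H| = [\mathcal G(\mathcal C):G]$, so it suffices to prove that $\mathcal H \cap \mathcal H' = \vecg$ inside $\mathcal C$.

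On invertibles, the squared braiding defines a symmetric bicharacter $b : A \times A \to \mathbf{k}^{\times}$ via $c_{h,g} \circ c_{g,h} = b(g,h)\,\mathrm{id}_{g \otimes h}$. An invertible $g$ lies in the radical of $b$ exactly when it centralizes every object of $\mathcal C_{\pt}$, i.e.\ when $g \in A \cap (\mathcal C_{\pt})' = A \cap \mathcal C_{\ad} = K$. Hence $b$ descends to a non-degenerate bicharacter $\bar b$ on $A/K$, which induces an isomorphism $A/K \cong \widehat{A/K}$ with the Pontryagin dual.

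The core computation is to verify that the radical of $b|_{H \times H}$ is trivial, as this radical equals the group of invertibles of $\mathcal H \cap \mathcal H'$. Take $h \in H$ with $b(h,h') = 1$ for all $h' \in H$. Since $K \leq G$ and $H \cap G = \{1\}$, the natural map $H \hookrightarrow A/K$ is injective and $A/K = (G/K) \oplus H$. Under $\bar b$, the class of $h$ in $A/K$ corresponds to a character of $A/K$ that is trivial on $H$; hence it factors through $(A/K)/H \cong G/K$, so its order divides $|G/K|$. On the other hand, this order coincides with the order of the class of $h$ in $A/K$, which divides $|H|$. The coprimality of $|G|$ and $|H|$ forces $\gcd(|G/K|,|H|) = 1$, so the order is $1$, and thus $h \in H \cap K \subseteq H \cap G = \{1\}$. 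I expect the main subtlety to be coordinating the bicharacter with the direct sum decomposition through the quotient $A/K$, but this is forced to behave well precisely because $K \leq G$, so that $A/K$ inherits the analogous direct sum $(G/K) \oplus H$.
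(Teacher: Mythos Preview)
Your proof is correct, but it takes a genuinely different route from the paper's. The paper works ``from the top down'': it forms the graded subcategory $\mathcal D = \bigoplus_{g\in G}\mathcal C_g$, uses M\"uger's dimension formula to compute $\FPdim(\mathcal D') = [\mathcal G(\mathcal C):G]$, observes that $\mathcal D' \subset (\mathcal C_{\ad})' = \mathcal C_{\pt}$ so $\mathcal D'$ is pointed, and then argues via the $G$-grading of $\mathcal D$ that any simple in $\mathcal D\cap\mathcal D'$ must land in $\mathcal C_{\ad}$ and hence be trivial. You instead work ``from the bottom up'': you pick the complement $H$ of $G$ in the abelian group $\mathcal G(\mathcal C)$ directly, and verify modularity of the associated pointed subcategory by analyzing the double-braiding bicharacter $b$ restricted to $H$, using that its radical on all of $\mathcal G(\mathcal C)$ is exactly $\mathcal G(\mathcal C_{\ad})$ and then a clean Pontryagin-duality order argument. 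Your approach is more elementary in that it avoids M\"uger's centralizer dimension formula and the grading machinery of \cite{N1}, reducing everything to finite abelian group theory on invertibles; the paper's approach, on the other hand, does not need to explicitly name the complement $H$ or invoke the bicharacter, and makes the resulting modular subcategory $\mathcal D'$ canonical rather than dependent on a choice of splitting.
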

\begin{proof}
 Consider the fusion subcategory
\[\mathcal D := \bigoplus_{g\in G} \C_g.\]
Notice that since $\mathcal C_{\ad} \subset \mathcal D$, we have that $\mathcal D' \subset (\mathcal C_{\ad})^{'} \cong \mathcal C_{\pt}$ \cite[Corollary 6.9]{GN}. Since $\fpdim(\mathcal D) = |G|\fpdim(\Cad)$, it follows from \cite[Theorem 3.2]{M}, that $\fpdim(\mathcal D') = \frac{|\mathcal G(\C)|}{|G|}$. So, by assumption, all objects (which are invertibles) in $\mathcal D'$ have order co-prime with $|G|$. Then, since $\mathcal G(\mathcal C_{\ad}) \leq G$, we have that all objects in $\mathcal D'$ have order co-prime with $|\mathcal G(\mathcal C_{\ad})|$, and thus, $\mathcal D' \cap \mathcal C_{\ad} = \vecg$. Let $t$ be a simple (then invertible) object in $\mathcal D \cap \mathcal D'$. Then $t$ is the unit or has non-trivial order dividing $ \frac{|\mathcal G(\C)|}{|G|}$. Since $\mathcal D$ is graded by $G$, it follows from \cite[Proposition 4.1(i)]{N1} that $t$ must be in $\mathcal C_{\ad}$, but this implies that $t$ is the unit. Thus, $\mathcal D\cap \mathcal D' = \vecg,$ and so $\mathcal D'$ is a modular pointed category of dimension $[\mathcal G(\mathcal C):G]$, as desired.
\end{proof}

\begin{remark}
    Let $\mathcal C$ be an MTC, and let $p$ be a prime that divides $|\GC|$ but not $|\mathcal{G(\Cad)}|$. It follows from the theorem above that $\mathcal C$ contains a modular pointed subcategory of dimension $p^n$, where $p^n$ is the highest power of $p$ that divides $|\GC|$. In fact, if $|\GC|=p^nm,$ take $G$ as the subgroup of $\GC$ of order $m$. 
\end{remark}

\begin{corollary}\label{GC and GCad same prime factors}
    Let $\C$ be an MTC. Then 
    \begin{align*}
        \mathcal C \cong \mathcal P \boxtimes \tilde{\C},
    \end{align*}
    where $\mathcal P$ is a pointed modular category  and $\tilde{\C}$ is a modular category such that $\mathcal G(\tilde{\C})$ and $\mathcal G(\tilde{\C}_{\mathrm{ad}})$ have the same prime factors.  
\end{corollary}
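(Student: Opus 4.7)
The plan is to peel off pointed modular factors one prime at a time, applying the Remark following Theorem \ref{theo: rel prime} iteratively. Call a prime $p$ \emph{bad} for an MTC $\mathcal D$ if $p$ divides $|\mathcal G(\mathcal D)|$ but not $|\mathcal G(\mathcal D_{\ad})|$. Since $\mathcal G(\mathcal D_{\ad})\subseteq \mathcal G(\mathcal D)$, every prime factor of $|\mathcal G(\mathcal D_{\ad})|$ automatically divides $|\mathcal G(\mathcal D)|$, so the conclusion ``$\mathcal G(\tilde\C)$ and $\mathcal G(\tilde\C_{\ad})$ have the same prime factors'' is equivalent to $\tilde\C$ having no bad primes. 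If $\C$ already has none, we take $\mathcal P=\vecg$ and $\tilde\C=\C$.

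Now suppose $p$ is bad for $\C$, and let $p^n$ be the largest power of $p$ dividing $|\mathcal G(\C)|$. The Remark after Theorem \ref{theo: rel prime} furnishes a pointed modular subcategory $\mathcal P_p\subseteq \C$ with $\fpdim(\mathcal P_p)=p^n$: its hypothesis is met by taking $G$ to be the $p'$-Hall subgroup of the finite abelian group $\mathcal G(\C)$, which contains $\mathcal G(\C_{\ad})$ precisely because $p\nmid |\mathcal G(\C_{\ad})|$. Since $\mathcal P_p$ is modular inside the modular category $\C$, Müger's theorem yields a braided equivalence $\C\cong \mathcal P_p\boxtimes \C^{(1)}$ with $\C^{(1)}:=(\mathcal P_p)'$ again modular. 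Since $\mathcal P_p$ is pointed we have $(\mathcal P_p)_{\ad}=\vecg$; using that $(A\boxtimes B)\otimes (A\boxtimes B)^*=(A\otimes A^*)\boxtimes (B\otimes B^*)$ on simples gives $(\mathcal A\boxtimes \mathcal B)_{\ad}=\mathcal A_{\ad}\boxtimes \mathcal B_{\ad}$, so in particular $\C_{\ad}\cong (\C^{(1)})_{\ad}$. Tracking cardinalities,
\[
|\mathcal G((\C^{(1)})_{\ad})|=|\mathcal G(\C_{\ad})|, \qquad |\mathcal G(\C^{(1)})|=\frac{|\mathcal G(\C)|}{p^n},
\]
so the bad primes of $\C^{(1)}$ are exactly those of $\C$ with $p$ removed.

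Iterating this strictly monotone procedure terminates after finitely many steps, producing
\[
\C\cong \mathcal P_{p_1}\boxtimes \cdots \boxtimes \mathcal P_{p_k}\boxtimes \tilde\C,
\]
where each $\mathcal P_{p_i}$ is pointed modular and $\tilde\C$ has no bad primes. Setting $\mathcal P:=\mathcal P_{p_1}\boxtimes\cdots\boxtimes \mathcal P_{p_k}$ (still pointed modular) gives the claim. The only genuinely substantive point in the plan is the invariance $\C_{\ad}\cong (\C^{(1)})_{\ad}$, which is what ensures that stripping off $\mathcal P_p$ does not introduce new bad primes into the remaining factor; everything else reduces to bookkeeping inside the finite abelian group $\mathcal G(\C)$.
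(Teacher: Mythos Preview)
Your argument is correct. The paper states this corollary without proof, treating it as an immediate consequence of Theorem~\ref{theo: rel prime} and the Remark following it; your iterative peeling-off of one bad prime at a time, together with the observation $(\mathcal P_p\boxtimes\C^{(1)})_{\ad}\cong(\C^{(1)})_{\ad}$, supplies exactly the missing details and uses precisely the tools the paper sets up. One small streamlining: rather than iterating, you can apply Theorem~\ref{theo: rel prime} once with $G$ the Hall subgroup of $\mathcal G(\C)$ for the primes dividing $|\mathcal G(\C_{\ad})|$, obtaining the full pointed modular factor $\mathcal P$ in a single step; the same adjoint computation then finishes the proof.
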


\begin{remark}
       Let $\mathcal C$ be an MTC such that $|\mathcal G(\Cad)|$  and $\frac{|\mathcal G(\C)|}{|\mathcal G(\Cad)|}$ are relatively prime. It follows from the corollary above that
       \begin{align*}
        \mathcal C \cong \mathcal P \boxtimes \tilde{\C},
    \end{align*}
    where $\mathcal P$ is a pointed modular category of dimension $\frac{|\mathcal G(\C)|}{|\mathcal G(\Cad)|}$, and $\tilde{\C}$ is a modular category satisfying that $\mathcal G(\tilde{\C}) = \mathcal G(\tilde{\C}_{\mathrm{ad}})$. In particular, this is true for the case where $|\mathcal G(\mathcal C)|$ is square-free.
\end{remark}

\begin{proposition}\label{prop: pointed product of cyclic}
Let $\mathcal C$ be a weakly-integral MTC and suppose that $|\mathcal G(\mathcal C)| = p^2$ for an odd prime $p$. If $\mathcal G(\mathcal C) \cong \mathbb Z_p \times \mathbb Z_p$, then $\mathcal C$ contains a pointed modular subcategory of dimension $[\mathcal G(\mathcal C): \mathcal G(\mathcal C_{\ad})]$.
\end{proposition}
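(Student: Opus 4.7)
The proof splits naturally into three cases according to $|\mathcal G(\mathcal C_{\ad})| \in \{1, p, p^2\}$, the only possible orders since $\mathcal G(\mathcal C_{\ad})$ is a subgroup of $\mathcal G(\mathcal C) \cong \mathbb Z_p \times \mathbb Z_p$. Two of these are immediate: if $|\mathcal G(\mathcal C_{\ad})| = p^2$, the required index is $1$ and $\vecg$ is the desired subcategory; if $|\mathcal G(\mathcal C_{\ad})| = 1$, applying Theorem~\ref{theo: rel prime} with $G = \{e\}$ (which is allowed since $\gcd(1,p^2)=1$) produces a pointed modular subcategory of dimension $p^2$. Only the middle case $|\mathcal G(\mathcal C_{\ad})| = p$ requires genuine work, because Theorem~\ref{theo: rel prime} is inapplicable there ($\gcd(p,p) = p$).

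In this remaining case, the plan is to exhibit an invertible $h$ of order $p$ lying outside $\mathcal G(\mathcal C_{\ad})$ for which $\mathcal C[h]$ is modular. The main tool is the symmetric bicharacter $B : \mathcal G(\mathcal C) \times \mathcal G(\mathcal C) \to \mathbf k^\times$ defined by the double braiding, $c_{h,g} c_{g,h} = B(g,h)\,\operatorname{id}_{g \otimes h}$. Using $(\mathcal C_{\pt})' = \mathcal C_{\ad}$ from \cite[Corollary 6.9]{GN}, an invertible $g$ lies in $\mathcal C_{\ad}$ exactly when $B(g,-) \equiv 1$, so $\operatorname{rad}(B) = \mathcal G(\mathcal C_{\ad})$ has order $p$. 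Since $\mathcal G(\mathcal C)$ has exponent $p$, fixing a primitive $p$-th root of unity lets one view $B$ as a symmetric $\mathbb F_p$-bilinear form $\tilde B$ on $\mathbb F_p^2$ whose radical is one-dimensional, i.e.\ of rank $1$.

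A rank-$1$ symmetric bilinear form on $\mathbb F_p^2$ has the shape $\tilde B(x,y) = c\,\phi(x)\phi(y)$ for some $c \in \mathbb F_p^\times$ and non-zero linear functional $\phi$ with $\ker\phi = \operatorname{rad}(\tilde B)$. Picking any $h \in \mathcal G(\mathcal C) \setminus \mathcal G(\mathcal C_{\ad})$, which exists because $\mathbb Z_p \times \mathbb Z_p$ contains elements of order $p$ outside any given order-$p$ subgroup, yields $\phi(h) \neq 0$ and hence $\tilde B(h,h) = c\,\phi(h)^2 \neq 0$, i.e.\ $B(h,h) \neq 1$. Therefore $h$ does not centralize itself; equivalently $\mathcal C[h] \not\subseteq \mathcal C[h]'$, and since $\mathcal C[h]$ has prime order $p$ the only possibility is $\mathcal C[h] \cap \mathcal C[h]' = \vecg$, making $\mathcal C[h]$ a pointed modular subcategory of dimension $p = [\mathcal G(\mathcal C) : \mathcal G(\mathcal C_{\ad})]$. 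The only genuine conceptual step is the identification $\operatorname{rad}(B) = \mathcal G(\mathcal C_{\ad})$, which rests on the modularity of $\mathcal C$ via \cite[Corollary 6.9]{GN}; the rest is elementary linear algebra over $\mathbb F_p$, in which the hypothesis $\mathcal G(\mathcal C) \cong \mathbb Z_p \times \mathbb Z_p$ (as opposed to $\mathbb Z_{p^2}$) is essential to guarantee the existence of $h$ outside the radical.
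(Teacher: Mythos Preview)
Your proof is correct and takes a genuinely different route from the paper in the main case $|\mathcal G(\mathcal C_{\ad})|=p$. The paper argues as follows: since exactly $p$ components of the universal grading carry invertibles (by \cite[Proposition 3.17]{CGP}), and $\mathbb Z_p\times\mathbb Z_p$ has $p+1$ subgroups of order $p$, one can choose a $\mathbb Z_p$-graded subcategory $\mathcal D=\bigoplus_i \mathcal C_{g^i}$ whose only invertibles lie in $\mathcal C_{\ad}$; then $\mathcal D'$ is pointed of dimension $p$, and a contradiction via the dimension formula \cite[Theorem 8.21.1]{EGNO} rules out $\mathcal D'\subseteq\mathcal D$, forcing $\mathcal D'$ to be modular. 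Your argument bypasses this entirely by working directly with the double-braiding bicharacter $B$ on $\mathcal G(\mathcal C)$: identifying $\operatorname{rad}(B)=\mathcal G(\mathcal C_{\ad})$ via $(\mathcal C_{\pt})'=\mathcal C_{\ad}$ and then observing that a rank-one symmetric $\mathbb F_p$-form is anisotropic off its radical, so any $h\notin\mathcal G(\mathcal C_{\ad})$ already satisfies $B(h,h)\neq 1$ and $\mathcal C[h]$ is modular. Your approach is more elementary and self-contained (no appeal to \cite{CGP} or the dimension formula), and it actually proves more: \emph{every} order-$p$ invertible outside $\mathcal G(\mathcal C_{\ad})$ generates a modular subcategory, not just one. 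The paper's approach, on the other hand, fits the pattern of Theorem~\ref{theo: rel prime} (building the modular piece as a centralizer of a graded subcategory), which is the technique that generalizes. One small wording issue: your closing remark that the hypothesis $\mathcal G(\mathcal C)\cong\mathbb Z_p\times\mathbb Z_p$ is needed ``to guarantee the existence of $h$ outside the radical'' should really say ``an $h$ of order $p$ outside the radical''; in $\mathbb Z_{p^2}$ the unique order-$p$ subgroup would coincide with the radical.
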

\begin{proof}
If $|\mathcal G(\mathcal C_{\ad})| = 1$, then from the proof of \cite[Lemma 5.2]{CP}, $\mathcal C \cong \mathcal C_{\ad} \boxtimes \mathcal C_{\pt}$, and since $\fpdim(\mathcal C_{\pt}) = [\mathcal G(\mathcal C):\mathcal G(\mathcal C_{\ad})]$, the claim holds in this case. 

So, suppose that $|\mathcal G(\mathcal C_{\ad})| > 1$, and thus, we have that $|\mathcal G(\mathcal C_{\ad})| = p$. Then, from \cite[Proposition 3.17]{CGP}, we have exactly $p$ components that contain invertible objects. More precisely, each of them contains $p$ invertibles.
Since $\mathcal G(\mathcal C) \cong \mathbb Z_p \times \mathbb Z_p$, we have that there are $p+1$ distinct subgroups of order $p$. Then, there must be a fusion subcategory 
\[\mathcal D := \mathcal C_{\ad} \oplus \mathcal C_{g} \oplus \mathcal C_{g^2} \oplus \cdots \oplus \mathcal C_{g^{p-1}},\]
with $\mathcal G(\mathcal D) \cong \mathcal G(\mathcal C_{\ad})$ and $g$ of order $p$.
Since $\mathcal C_{\ad} \subset \mathcal D$, we have that $\mathcal D' \subset (\mathcal C_{\ad})^{'} \cong \mathcal C_{\pt}$ \cite[Corollary 6.9]{GN}. From \cite[Theorem 3.2]{M}, it follows that $\fpdim(\mathcal D) = \frac{\fpdim(\mathcal C)}{p}$ and then $\fpdim(\mathcal D') = p$. If $\mathcal D \cap \mathcal D' = \vecg$, then the claim holds. 

So, suppose that $\mathcal D \cap \mathcal D'$ is not trivial, and we have $\mathcal D \cap \mathcal D' = \mathcal D'$. So $\mathcal G(\mathcal D') \cong \mathcal G(\mathcal D) \cong \mathcal G(\mathcal C_{\ad})$. Then, notice from \cite[Proposition 9.6.6]{EGNO} that we can use \cite[Theorem 8.21.1(i)]{EGNO} to obtain
     \[\fpdim(\mathcal C_{\ad} \cap \mathcal D')\fpdim(\mathcal D) = \fpdim(\mathcal D \cap (\mathcal C_{\ad})')\fpdim(\mathcal C_{\ad}).\] 
Since $\fpdim(\mathcal C_{\ad} \cap \mathcal D') = p = \fpdim(\mathcal D \cap (\mathcal C_{\ad})')$ and $\fpdim(\mathcal D) = \frac{\fpdim(\mathcal C)}{p} = p \fpdim(\mathcal C_{\ad})$, we get a contradiction  and the proposition is proven.
\end{proof}

\section{Solvability} \label{solvability}

Recall that it was conjectured in \cite[Conjecture 1.2]{CP} that any odd-dimensional modular category is solvable. The corollary bellow follows directly from Corollary \ref{class} and \cite[Proposition 4.5]{ENO2}.

   \begin{corollary}\label{cor: solvable iif}
        Odd-dimensional MTCs are solvable if and only if MTCs of dimension congruent to 2 modulo 4 are solvable.
    \end{corollary}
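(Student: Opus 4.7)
The plan is to exploit the factorization $\C \cong \widetilde{\C}\boxtimes\sem$ from Corollary \ref{class} together with the stability properties of solvable fusion categories recorded in \cite[Proposition 4.5]{ENO2}, namely closure under Deligne tensor products and under passage to fusion subcategories. The argument splits into the two implications.

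For the forward direction, I would assume that every odd-dimensional MTC is solvable and let $\C$ be an MTC with $\FPdim(\C)\equiv 2\pmod 4$. By Corollary \ref{class}, there is an odd-dimensional MTC $\widetilde{\C}$ such that $\C\cong\widetilde{\C}\boxtimes\sem$. Then $\widetilde{\C}$ is solvable by hypothesis, and $\sem$ is pointed with underlying group $\Z_2$, hence solvable. Since the Deligne tensor product of two solvable fusion categories is solvable by \cite[Proposition 4.5]{ENO2}, it follows that $\C$ is solvable.

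For the converse, assume every MTC of dimension congruent to 2 modulo 4 is solvable and let $\widetilde{\C}$ be an odd-dimensional MTC. I would consider the Deligne product $\widetilde{\C}\boxtimes\sem$, which is modular and has Frobenius-Perron dimension $2\cdot\FPdim(\widetilde{\C})$; since $\FPdim(\widetilde{\C})$ is odd, this dimension is congruent to $2$ modulo $4$. By hypothesis, $\widetilde{\C}\boxtimes\sem$ is solvable, and since $\widetilde{\C}$ embeds into it as a fusion subcategory, the closure of solvability under fusion subcategories \cite[Proposition 4.5]{ENO2} gives that $\widetilde{\C}$ is solvable.

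There is no real obstacle here; the statement is essentially a repackaging of Corollary \ref{class} once one observes that the semion contributes nothing to solvability and that solvability transfers in both directions across a Deligne factor. The only point to be careful about is confirming that $\FPdim(\widetilde{\C}\boxtimes\sem)\equiv 2\pmod 4$ in the converse direction, which is immediate from the multiplicativity of Frobenius-Perron dimension under Deligne product.
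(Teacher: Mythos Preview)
Your proof is correct and follows exactly the approach the paper takes: the paper simply states that the corollary follows directly from Corollary \ref{class} and \cite[Proposition 4.5]{ENO2}, and your argument spells out precisely how those two ingredients combine in each direction.
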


Hence \cite[Conjecture 1.2]{CP} is equivalent to the following. 
\begin{conjecture} \label{solvable}
MTCs of Frobenious-Perron dimension not divisible by 4 are solvable.
\end{conjecture}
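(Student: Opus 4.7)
The plan is to use Corollary \ref{cor: solvable iif} to reduce Conjecture \ref{solvable} to the odd-dimensional case, and then attempt an inductive proof of that case using the factorization results of Section \ref{factorization} together with standard equivariantization techniques.

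First, if $\C$ is an MTC with $\FPdim(\C)$ not divisible by 4, then either $\FPdim(\C)$ is odd, or $\FPdim(\C)\equiv 2\pmod 4$. In the latter case, Corollary \ref{class} gives $\C\cong \tilde{\C}\boxtimes \sem$ with $\tilde{\C}$ odd-dimensional. Since $\sem$ is pointed, hence solvable, and solvability is closed under Deligne tensor product by \cite[Proposition 4.1]{ENO2}, $\C$ is solvable if and only if $\tilde{\C}$ is. So it suffices to establish \cite[Conjecture 1.2]{CP}, namely that odd-dimensional MTCs are solvable.

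For the odd-dimensional case, I would proceed by strong induction on $\FPdim(\C)$, supposing $\FPdim(\C)>1$ and $|\GC|>1$. If $\GCad\subsetneq\GC$, then the construction behind Theorem \ref{theo: rel prime} and Corollary \ref{GC and GCad same prime factors} splits off a non-trivial pointed modular subcategory $\mathcal P$, writing $\C\cong \mathcal P\boxtimes \tilde{\C}$ with $\tilde{\C}$ of strictly smaller dimension; since pointed modular categories are solvable and $\tilde{\C}$ is solvable by induction, so is $\C$. If instead $\GC=\GCad$, then $\Cadpt$ is non-trivial; being symmetric by \cite[Remark 2.2]{CP} and odd-dimensional, it is Tannakian and equivalent to $\Rep(G)$ for a non-trivial finite abelian group $G$ by \cite[Corollary 2.50]{DGNO1}. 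The de-equivariantization $(\Cad)_G$ is then a modular category of strictly smaller odd dimension, solvable by induction. Equivariantization by the solvable group $G$ preserves solvability \cite[Proposition 4.5]{ENO2}, so $\Cad\cong((\Cad)_G)^G$ is solvable; and since $\C$ is a $\GC$-extension of $\Cad$ with $\GC$ abelian, $\C$ is solvable.

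The main obstacle is ruling out the perfect case: one must show that every non-trivial odd-dimensional MTC has a non-trivial invertible object. Unlike the setting $\FPdim(\C)\equiv 2\pmod 4$, which is handled by \cite[Corollary 10.11]{BP}, there is at present no analogous non-vanishing result for $|\GC|$ in odd dimension. A natural angle would be to combine the arithmetic constraint that $\FPdim(X)^2$ divides $\FPdim(\C)$ for every simple $X$ with the congruence subgroup property of the modular representation, either to force an invertible object directly or to bound the rank enough that a finite case-by-case verification becomes feasible; this is precisely the content of the open problem \cite[Conjecture 1.2]{CP}, and any genuine progress on Conjecture \ref{solvable} seems to require a new idea at this point.
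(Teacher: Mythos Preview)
The statement you are addressing is a \emph{conjecture} in the paper, not a theorem; the paper does not prove it. What the paper does establish is precisely your first reduction: Corollary~\ref{cor: solvable iif} shows that Conjecture~\ref{solvable} is equivalent to \cite[Conjecture 1.2]{CP} on odd-dimensional MTCs, and the paper then simply records Conjecture~\ref{solvable} as an equivalent reformulation. Your first paragraph reproduces that reduction correctly.

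Your second and third paragraphs go beyond anything the paper attempts. The inductive scheme you outline for the odd-dimensional case is sound as far as it goes, and you have correctly isolated the genuine obstruction: the base of the induction requires that a non-trivial odd-dimensional MTC is never perfect, and no such result is currently known. This is exactly the content of the open problem \cite[Conjecture 1.2]{CP}, so your ``proof proposal'' is not a proof but a reduction of one open conjecture to another (indeed, to the same one the paper already names). Since the paper itself does not claim a proof, there is no discrepancy to flag---your write-up is an honest sketch of where the problem stands, matching the paper's position.
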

    
Feit-Thompson's Theorem states that groups of odd order are solvable, and it can be generalized to the case of groups of order not divisible by 4. We will use this result in the following lemma that generalizes \cite[Proposition 7.1]{NP}, which is a Feit-Thompson Theorem for weakly group-theoretical fusion categories. Our proof is similar 
to that of \cite[Proposition 7.1]{NP} 
but we include it for completeness.

 \begin{lemma}\label{lemma: wgt dim cong 2 solv}
    Let $\mathcal C$ be a weakly group-theoretical fusion category with dimension congruent to 2 modulo 4. Then, $\mathcal C$ is solvable.
\end{lemma}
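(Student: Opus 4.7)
The plan is to adapt the proof of \cite[Proposition 7.1]{NP}, now using the generalization of Feit--Thompson: every finite group of order not divisible by $4$ is solvable. (This generalization is immediate from Feit--Thompson itself, since a group $G$ of order $2m$ with $m$ odd contains a normal subgroup of index two, namely the kernel of the sign character of its left regular representation, and the quotient $\Z_2$ is solvable.) I proceed by induction on $\fpdim(\C) = 2m$ with $m$ odd; the base case $m = 1$ is immediate, as a fusion category of Frobenius--Perron dimension $2$ is forced to be pointed of rank $2$, equivalent to $\vecg_{\Z_2}^\omega$, hence solvable.

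For the inductive step with $m > 1$, I would apply the structural theorem for weakly group-theoretical fusion categories \cite[Theorem 1.6]{ENO2} to obtain a nontrivial Tannakian subcategory $\Rep(G) \subset \mathcal Z(\C)$. Since $|G|$ divides $\fpdim(\C) = 2m$ and hence is not divisible by $4$, the generalized Feit--Thompson theorem implies that $G$ is solvable and therefore admits a normal subgroup of prime index $p$. Pulling back along the surjection $G \twoheadrightarrow \Z_p$ yields a Tannakian subcategory $\Rep(\Z_p) \subset \mathcal Z(\C)$, with $p$ prime and $p \mid 2m$. Let $\mathcal A := \C_{\Z_p}$ denote the associated de-equivariantization of $\C$; then $\mathcal A$ is again weakly group-theoretical by \cite[Proposition 4.1]{ENO2}, carries a $\Z_p$-action with $\C \cong \mathcal A^{\Z_p}$, and has $\fpdim(\mathcal A) = 2m/p$.

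If $p$ is odd, then $\fpdim(\mathcal A) = 2(m/p)$ is once again twice an odd integer, so $\mathcal A$ is solvable by the inductive hypothesis. If instead $p = 2$, then $\fpdim(\mathcal A) = m$ is odd, and $\mathcal A$ is solvable by \cite[Proposition 7.1]{NP}. In either case, $\mathcal A$ is solvable, and since $\Z_p$ is (trivially) solvable, the equivariantization $\C \cong \mathcal A^{\Z_p}$ is solvable by the closure of the class of solvable fusion categories under equivariantization by solvable groups \cite[Proposition 4.5]{ENO2}. The principal obstacle is the first step of the inductive argument: locating the appropriate statement in \cite{ENO2} that guarantees a Tannakian subcategory in $\mathcal Z(\C)$ for a weakly group-theoretical $\C$ and verifying that it induces a well-defined de-equivariantization of $\C$ itself (via the central functor $\Rep(\Z_p) \to \C$ it determines); once this is in place, the remainder is a routine invocation of the closure properties in \cite{ENO2}.
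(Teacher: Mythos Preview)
Your approach differs from the paper's, and the point you flag as the ``principal obstacle'' is a genuine gap, not merely a missing citation. The paper's argument is shorter: since solvability is Morita invariant \cite[Proposition 4.5]{ENO2}, one may replace $\mathcal C$ by a Morita-equivalent \emph{nilpotent} category (of the same Frobenius--Perron dimension) and run the induction there. A nilpotent category of dimension $>1$ is, by definition, a $G$-extension of a strictly smaller nilpotent category for some nontrivial $G$; splitting into $|G|$ odd versus $|G|\equiv 2\pmod 4$ lands one either in the inductive hypothesis or in \cite[Proposition 7.1]{NP}, and closure of solvability under extensions by solvable groups finishes.

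The problem with your route is the step $\mathcal C\cong\mathcal A^{\mathbb Z_p}$. A Tannakian $\Rep(\mathbb Z_p)\subset\mathcal Z(\mathcal C)$ gives a central functor $\Rep(\mathbb Z_p)\to\mathcal C$, but for $p$ prime this functor is either fully faithful or has image $\vecg$. Only in the first case is $\mathcal C$ an equivariantization of $\mathcal C_{\mathbb Z_p}$; in the second, the Tannakian subcategory instead encodes a $\mathbb Z_p$-\emph{grading} on $\mathcal C$, and your appeal to closure under equivariantization does not apply. You could patch this by handling the graded case separately (closure under $\mathbb Z_p$-extensions covers it), but at that point you are essentially redoing the nilpotent induction by hand, and the initial Morita reduction makes the case split unnecessary. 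The paper's version is both cleaner and closer to the proof in \cite{NP} that you set out to adapt.
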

\begin{proof}
Since $\mathcal C$ is a weakly group-theoretical fusion category, $\mathcal C$ is Morita equivalent to a nilpotent fusion category. By \cite[Proposition 4.5]{ENO2}, it is enough to show that a nilpotent fusion category with Frobenius-Perron dimension congruent to 2 modulo 4 is solvable. Suppose $\mathcal C$ is nilpotent. Let us now prove the proposition by induction on $\fpdim(\mathcal C)$. 

If $\fpdim(\mathcal C) = 2$, $\mathcal C$ is pointed and then solvable. 
Now, suppose that $\fpdim(\mathcal C) > 2$. Since $\mathcal C$ is weakly group-theoretical, it is a $G$-extension of a fusion subcategory $\mathcal C_1$ for a non-trivial group $G$. Therefore,
\[\fpdim(\mathcal C) = |G|\fpdim(\mathcal C_1) \ \textrm{with} \ |G| > 1.\] 

If $|G|$ is odd, then $\fpdim(\mathcal C_1) < \fpdim(\mathcal C)$ is also congruent to 2 modulo 4 and must be solvable by the inductive hypothesis.
Then, $\mathcal C$ is solvable because it is a $G$-extension of $\mathcal C_1$, see \cite[Proposition 4.5]{ENO2}.

If $|G|$ is even, it is congruent to 2 modulo 4. Then, $\fpdim(\mathcal C_1)$ is odd and $\mathcal C_1$ must be solvable by \cite[Proposition 7.1]{NP}. Since $|G|$ is congruent to 2 modulo 4, from the generalized Feit-Thompson's Theorem, $G$ is solvable as well. So, it follows from \cite[Proposition 4.5]{ENO2} that $\mathcal C$ is solvable. 
\end{proof}

In particular, a direct consequence of this result is that if $\mathcal C$ is an MTC with dimension not divisible by 4 that is an extension of a pointed category then it must be solvable. 
\begin{corollary}\label{cadpointed}
Let $\mathcal C$ be an MTC of dimension not divisible by 4. If $\mathcal C_{\ad} \subseteq \mathcal C_{\pt}$ then $\mathcal C$ is solvable. So, if $\mathcal C$ is not solvable, all components of the universal grading containing invertible objects must contain at least one non-invertible simple object. 
\end{corollary}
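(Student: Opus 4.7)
The plan is to reduce the first assertion to Lemma~\ref{lemma: wgt dim cong 2 solv} via a nilpotency argument, and then derive the second assertion as the contrapositive of the first by a coset-counting computation in the universal grading.

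For the first statement, assuming $\mathcal C_{\ad}\subseteq \mathcal C_{\pt}$, the first step is to observe that $\mathcal C_{\ad}$ is then pointed. Since the universal grading of any pointed fusion category $\VecGw$ is the $G$-grading in which each simple sits in its own component, we have $(\mathcal C_{\ad})_{\ad}=\vecg$. Hence the upper central series of $\mathcal C$ terminates at $\vecg$, so $\mathcal C$ is nilpotent and in particular weakly group-theoretical (equivalently, $\mathcal C$ is a $\mathcal U(\mathcal C)$-extension of the pointed category $\mathcal C_{\ad}$, and such extensions preserve weakly group-theoretical-ness by \cite[Proposition 4.1]{ENO2}). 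Now $\fpdim(\mathcal C)$ is either odd or congruent to $2\pmod 4$: in the first case solvability is \cite[Proposition 7.1]{NP}, and in the second it is Lemma~\ref{lemma: wgt dim cong 2 solv}.

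For the second statement, the plan is to prove the contrapositive: if some universal grading component $\mathcal C_g$ contains an invertible object $h$ but no non-invertible simple object, then $\mathcal C_{\ad}$ must be pointed, and solvability follows from the first part. The key computation is that the invertibles of $\mathcal C_g$ are precisely the objects $h\otimes a$ for $a\in \mathcal G(\mathcal C_{\ad})$. Indeed, each such $h\otimes a$ is an invertible in $\mathcal C_g$; conversely, given any invertible $h'\in \mathcal C_g$, the object $h^*\otimes h'$ is invertible and lies in $\mathcal C_{g^{-1}g}=\mathcal C_{\ad}$, so $h'\cong h\otimes(h^*\otimes h')$ is of the required form. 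Since tensoring with $h^*$ distinguishes these, there are exactly $|\mathcal G(\mathcal C_{\ad})|$ invertibles in $\mathcal C_g$. By hypothesis $\mathcal C_g$ is pointed, so $\fpdim(\mathcal C_g)=|\mathcal G(\mathcal C_{\ad})|$, and the faithfulness of the universal grading forces $\fpdim(\mathcal C_{\ad})=|\mathcal G(\mathcal C_{\ad})|$, which means $\mathcal C_{\ad}$ is pointed.

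The main obstacle to get right is the nilpotency step in the first part — specifically, confirming that $\mathcal C_{\ad}$ being pointed places $\mathcal C$ within the weakly group-theoretical class to which Lemma~\ref{lemma: wgt dim cong 2 solv} and \cite[Proposition 7.1]{NP} apply. Once this structural reduction is secured, the remaining work is the coset-counting argument above, which reduces the second statement cleanly to the first.
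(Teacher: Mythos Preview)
Your argument is correct and, for the first assertion, matches the paper's proof exactly: $\mathcal C_{\ad}$ pointed implies $\mathcal C$ nilpotent, hence weakly group-theoretical, and one then invokes Lemma~\ref{lemma: wgt dim cong 2 solv} or \cite[Proposition 7.1]{NP} according to whether $\fpdim(\mathcal C)$ is $2\pmod 4$ or odd. For the second assertion the paper simply cites \cite[Proposition 3.17]{CGP}, whose content is precisely the coset-counting fact you spell out (each component containing an invertible has exactly $|\mathcal G(\mathcal C_{\ad})|$ invertibles), so your self-contained argument is a direct unpacking of that reference rather than a genuinely different route.
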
 
\begin{proof}
By hypothesis,  since $\Cad$ is pointed, we have that $\mathcal C$ is nilpotent. So, by definition, $\mathcal C$ is weakly group-theoretical and, as a consequence of Lemma~\ref{lemma: wgt dim cong 2 solv}  and \cite[Proposition 7.1]{NP}, $\mathcal{C}$ is solvable. 
The second comment follows directly from \cite[Proposition 3.17]{CGP}.
\end{proof}

\end{document}